\setlist[enumerate,1]{font=\upshape, itemsep=1ex}\setlist[itemize,1]{font=\upshape, itemsep=1ex}
\def\T{{\mathbb T}}
\def\Z{{\mathbb Z}}
\def\F{{\mathbb F}}
\def\Q{{\mathbb Q}}
\def\R{{\mathbb R}}
\def\P{{\mathbb P}}
\def\co{\colon\thinspace}
\def\calk{\mathcal{K}}
\def\cals{\mathcal{S}}
\def\calc{\mathcal{C}}
\def\calk{\mathcal{K}}
\def\calj{\mathcal{J}}
\def\calt{\mathcal{T}}
\def\cs{\mathbin{\#}}
\def\t2k{$(2,2k\! +\!1)$}
\DeclareMathOperator{\cfk}{\rm CFK}
\newcommand{\spinc}{\ifmmode{{\mathfrak s}}\else{${\mathfrak s}$\ }\fi}
\newcommand{\spinct}{\ifmmode{{\mathfrak t}}\else{${\mathfrak t}$\ }\fi}
\newcommand{\spincr}{\ifmmode{{\mathfrak r}}\else{${\mathfrak r}$\ }\fi}
\newtheorem{theorem}{Theorem}[section] 
\newtheorem{lemma}[theorem]{Lemma}
\newtheorem{corollary}[theorem]{Corollary}
\newtheorem{proposition}[theorem]{Proposition}
\theoremstyle{definition}
\newtheorem{definition}[theorem]{Definition}
\newtheorem{example}[theorem]{Example}
\begin{document}
\title[A metric on the projective knot concordance group]{Cobordism distance on the projective space of the  knot concordance group}

\author{Charles Livingston}
\thanks{This work was supported by a grant from the National Science Foundation, NSF-DMS-1505586.   }
\address{Charles Livingston: Department of Mathematics, Indiana University, Bloomington, IN 47405}\email{livingst@indiana.edu}



\begin{abstract}   We use the cobordism distance on the smooth knot concordance group $\calc$ to measure how close two knots are to being linearly dependent.  Our measure, $\Delta(\calk, \calj)$, is built by minimizing the cobordism distance between all pairs of knots, $\calk'$ and $\calj'$, in cyclic subgroups containing $\calk$ and $\calj$. When made precise, this leads to the definition of the projective space of the concordance group, $\P(\calc)$, upon which $\Delta$ defines an integer-valued metric $\Delta$. We explore basic properties of $\P(\calc)$ by  using torus knots $T_{2,2k+1}$.  Twist knots are used to demonstrate that the natural simplicial complex  $\overline{ (\P(\calc), \Delta)}$ associated to the metric space  $\P(\calc)$ is infinite dimensional.
\end{abstract}

\maketitle


\section{Introduction}  
We let $\calc$ denote the smooth  concordance group of knots in $S^3$.   There are two fundamental measures of relationship between elements in $\calc$: one is algebraic, linear independence; the other is geometric, the cobordism distance, a metric defined by $d(\calk, \calj) = g_4( \calk \cs -\calj)$, where $g_4$ denotes the four-genus.
Here we combine these approaches to build a  measure of how  close knots are to being dependent in $\calc$.  Roughly stated, the distance between a pair of knots  is defined as the  minimum cobordism distance between all possible multiples and divisors of the knots.  When made formal, this leads to the definition of the projective space of the concordance group  $\P(\calc)$   and a metric  $\Delta$  built from the cobordism distance.

To make this somewhat more precise, for any vector space $V$ over a field $\F$, there is relation on $V^o = V \setminus 0$ defined by $v \sim w$ if there exists an  $\alpha \in \F$,    $\beta \in \F$,  and $z \in V$ such that   $\alpha z = v$ and $\beta z  = w$.   The set of equivalence classes is the projective space $\P(V)$.  We can define a similar relationship on an abelian group $G$, replacing the field $\F$ with the integers, $\Z$.  In this case, the relation might not be transitive, but it does generate an equivalence relation.  The equivalence classes form the  associated projective space of the  group,   $\P(G)$.

In the case of $\P(\calc)$,  we can define a metric based on the cobordism distance:  again  roughly stated, for classes $[\calk] \in \P(\calc)$ and $[\calj] \in \P(\calc)$, the distance $\Delta([\calk], [\calj])$  is defined by minimizing $d(\calk' , \calj')$ over all representatives of the classes $[\calk]$ and $[\calj]$.   To define  this formally, there are some technical modifications required to ensure that the triangle inequality holds.  
Our goals include the following.

\begin{itemize}
\item  Define $\P(\calc)$ and study its basic properties.  Letting $\calt \subset \calc$ denote the torsion subgroup, we show there is a natural bijection between $\P(\calc)$  and the disjoint union $\P(\calt) \sqcup \P( \calc / \calt)$.  We show that $\P(\calt)$ is either trivial or in bijective correspondence with  $\Z_2^\infty \setminus 0$ depending on whether or not $\calc$ contains odd torsion.  There is a natural bijection between $\P(\calc /\calt)$ and the infinite rational projective space $\Q\P^\infty$.

\item Define the metric $\Delta \co \P(\calc) \times \P(\calc) \to \Z$.  This gives $\P(\calc)$ the structure of a graph.  

\item  Provide basic examples by studying the image of the  set of $(2,2k +1)$--torus knots in $\P(\calc)$.  

\item Define the associated {\it Vietoris-Rips} simplicial complex $\big| (\P(\calc), \Delta)\big|$ and use twist knots to prove it is infinite dimensional.

\item  Identify basic problems related to $(\P(\calc), \Delta)$.

\end{itemize}

\medskip
\noindent{\bf Background.}
We will work in the smooth category, but observe that our work here carries over to the topological locally-flat category.  It is possible to apply such tools as Upsilon invariant of Heegaard Floer theory~\cite{MR3667589} to achieve some finer results and in particular to  explore the subspace of $\P(\calc)$ generated by topologically slice knots.   For our basic examples, the signature function, which applies equally in both categories is more effective; see~\cite{MR3784267} for a comparison of the bounds on the four-genus of differences of torus knots that are determined by Upsilon and by the signature function.

In the 1960s, the combined work of Fox-Milnor~\cite{MR0211392}, Murasugi~\cite{MR0171275},  Milnor~\cite{MR0242163},   Levine~\cite{MR0253348}, and Tristam~\cite{MR0248854} demonstrated that as an abstract group, $\calc     \cong \Z^\infty \oplus \Z_2^\infty \oplus G$ for some countable abelian group $G$.   Nothing more is known today.

Despite that lack of progress in understanding $\calc$ from a purely algebraic standpoint, from a topological perspective tremendous strides have been made.  For instance, there are natural homomorphisms  of $\calc$ onto the topological concordance group and  onto the higher dimension knot concordance groups.  The   kernels of these maps are now known to contain infinite free summands  and to contain infinite 2--torsion; a few references include~\cite{MR620010, MR1734416, MR1334309}.    The primary examples used in proving the basic results about $\calc$ have been built from two-bridge knots and $(2,2k+1)$--torus knots.   Continuing research concerns further understanding the image of  classes of knots, such as two-bridge knots,  torus knots, and alternating knots,  in  $\calc$.  A few references include~\cite{MR2366190, MR4198504, MR3657225}.  

From a geometric perspective, understanding the four-genus of knots,  $g_4(K)$, was one of  the early motivations of developing the concordance group, and the induced function $g_4\co \calc   \to \Z$ continues to be studied.  One highlight of this has been the study of differences of torus knots, $g_4(T_{p,q} \cs -T_{p',q'})$; see, for instance,~\cite{MR4198504, feller2020note, MR2975163}.

\medskip
\noindent{\bf Examples.}

A few examples will clarify the issues underlying the project here.  We let $T_{2,2k+1}$ denote the $(2,2k+1)$--torus knot and lew  $\calt_{2,2k+1}$ denote its concordance class.

\begin{itemize}
\item   The classes $\calt_{2,5}$ and $ \calt_{2,11}$ are, in a sense,  close to linear dependent, since $d(2\calt_{2,5}, \calt_{2,11}) =1$.     On the other hand,   $\calt_{2,7}$ and $ \calt_{2,11}$   are further from being dependent:   $d(a \calt_{2,7}, b \calt_{2,11}) \ge 2$,  for all non-zero $a$ and $b$; these are simple consequences of the results of Sections~\ref{sec:geo2k} and~\ref{sec:tpq}.

\item  Issues related to the triangle inequality are illustrated by the following.  For all $a$ and $b$ nonzero, the minimum of  $d(a \calt_{2,41}, b \calt_{2,61}) $ is $2$, realized by $d( 3\calt_{2,41} ,2\calt_{2,61}) = 2$; the minimum of   $d(a \calt_{2,61}, b \calt_{2,91}) $ is $2$, realized by $d( 3\calt_{2,61} ,  2\calt_{2,91}) = 2$; and   the minimum of  $d(a \calt_{2,41}, b \calt_{2,91}) $ is $5$, realized by $d( 2\calt_{2,41},  \calt_{2,91}) = 5$.  This is discussed in Section~\ref{sec:failure} and elsewhere.

\item   The failure of transitivity that appears in defining $\P(G)$ is illustrated with the cyclic group $\Z_6$.  The pair of elements $\{1, 2\}$ have a nonzero common multiple, as does the pair $1$ and $3$.  Yet the pair $\{2,3\}$ does not have a common nonzero multiple.  In the context of $\calc$, we have  $2\calt_{2,3}$ is in an infinite number of distinct cyclic subgroups, generated by elements of the form $\calt_{2,3} \cs \calj$ for arbitrary 2--torsion elements $\calj$.  Thus, for instance, determining $\Delta(\calt_{p,q}, \calt_{p',q'})$ entails determining the minimum of $g_4( aT{p,q} \cs bT_{p',q'} \cs J)$ for all nonzero $a$ and $b$ and for all knots $J$ of finite order in $\calc$.

\end{itemize}
\medskip
\noindent{\bf Notation.}  It will be important to distinguish between a knot and the concordance class represented by the knot.  To do so, we will change typeface; for instance, for the knot $K \subset S^3$ we will  write $\calk \subset \calc$ for the concordance class represented by $K$.  Later, when we place an equivalence relation on $\calc$ to form  projective space, $\P(\calc)$, the equivalence class of $\calk \in \calc$ will be denoted $[\calk] \in \P(\calc)$.  

The functions $g_4$ can be defined on  the  set of knots or  on $\calc$.  Similarly, we can refer to the distance $d(K,J)$ or $d(\calk, \calj)$, working with knots or concordance classes.  This ambiguity should not be problematic, so we do not add to the notation to distinguish these functions based on their domains.

\medskip

\noindent{\bf Outline.} Sections~\ref{sec:geo2k}~and~\ref{sec:tpq} discuss a basic family of  examples, that of two-stranded torus knots, $T_{2,2k+1}$.  In this setting we explore the problem of minimizing $d(aT_{2k+1}, bT_{2n+1})$ over all $a\ne 0$ and $ b \ne 0$.  The first of these two sections concerns geometric tools that provide upper bounds; the second section uses signature functions to find lower bounds.  We will define $\overline{d}(K, J) = \min \{ d(aK, bJ )\ \big| \ a, b \ne 0\}$.  Elementary consequences of the work in these sections are the following results presented in Section~\ref{sec:growth}:

\begin{itemize}
\item $\overline{d} (\calt_{2,2k+1}, \calt_{2,2n+1})$ goes to infinity for fixed $k$  as $n$ grows. The  growth rate is of  order $ {n}/{2k}$.

\item For fixed  $N>0$,  the set of values  $\overline{d}(\calt_{2,2k+1}, \calt_{2,2n+1})$ for all $k$ and $n$ satisfying $\big| n - k \big| \le N$ is bounded.

\item The function  $\overline{d} \co \calc \times \calc \to \Z$ does not satisfy the triangle inequality.

\end{itemize}

In Section~\ref{sec:project} we  develop the formal algebra  that permits us to define the appropriate equivalence relation and form the quotient space $\P(\calc)$.  In our setting,  we could restrict our algebraic discussion to general abelian groups,  but the natural map $\calc \to \calc \otimes \Q$ leads us to consider   projectivizing modules over $\Q$, and ultimately modules $M$ over arbitrary integral domains.   In Section~\ref{sec:metrics} we define a canonical metric on $\P(M)$ for a given integer-valued metric on $M$ and Section~\ref{sec:2kc} considers properties of that metric.

In Section~\ref{sec:distance1} we discuss tools for computing a   pseudo-metric $\delta$ that is initially defined on $\P(\calc)$.  This discussion is based on the signature function, as developed in Section~\ref{sec:tpq}.  

Section~\ref{sec:small ball} concerns the metric $\Delta\co \P(\calc) \times \P(\calc) \to \Z$.  Computations are much more difficult, so we restrict ourselves to the setting of \t2k-torus knots and the analysis of small balls in the subset of $\P(\calc)$ consisting of elements represented  by these knots.  It is worth mentioning now that for any class $\calk $ in the span of such torus knots, there are concordance class $\calj \in \calc$ satisfying  $ \calj  \in [\calk]$ but for which $\calj$ is not represented by a knot in the span of torus knots.   Here is a simple example. Let $\calj $ be any element of order two in $\calc$.  Then $2(\calt_{2,3} \cs \calj) = 2\calt_{2,3}$ so both represent the same element in $\P(\calc)$;  however, $\calt_{2,3} \cs J$ is not in the span of torus knots.  

 A challenging problems asks,  for each subgroup   $\cals \subset \calc$,  whether the inclusion $\P(\cals) \to \P(\calc)$ is an isometry. This and other problems are summarize in Section~\ref{sec:problems}.
 
 \medskip
 
 \noindent{\it Acknowledgements.}  I thank Samantha Allen for her valuable feedback on early drafts of the manuscript. 


\section{Torus knots, $T_{2,2k+1}$:   geometric constructions.}\label{sec:geo2k}

The space $\P(\calc)$ provides a structure for exploring knot concordance, but ultimately the questions that arise can only be understood by constructing surfaces in $B^4$ bounded by connected sums of the form  $aK \cs -bJ$ and then  finding lower bounds on the genus of such surfaces. In this section we consider the construction of  surfaces bounded by linear combinations of pairs $\{T_{2,2k+1}, T_{2,2n+1}\}$ and then focus on the  special case of minimizing $g_4(T_{2,2k+1} \cs - \beta T_{2,2n+1})$.  In the next section we use the signature function to show that minimizing $g_4(a T_{2,2k+1} \cs - b  T_{2,2n+1})$ over all $a$ and $b$ can be reduced to a finite set of such pairs, and completely  resolve the problem of minimizing $g_4( T_{2,2k+1} \cs - b  T_{2,2n+1})$.  We also provide examples for which the overall minimum over all $a$ and $b$  is not achieved with $a =1$.

At this point we note an  interesting aspect of the family of $(2,2k+1)$--torus knots:  from the perspective of Heegaard Floer theory, they   all appear to be linearly  dependent: there is a chain homotopy equivalence  $\cfk^\infty(nT_{2,2k+1}) \oplus A_1 \simeq\cfk^\infty(kT_{2,2n+1}) \oplus A_2$ for some pair of acyclic complexes $A_1$ and $A_2$.  More generally, Feller and Krcatovich~\cite{MR3694648} have demonstrated the limited ability of Heegaard Floer Upsilon to obstruct linear dependance in $\calc$ among general torus knots.  If one moves to the realm of involutive Heegaard Floer theory, some limited results concerning torus knots $T_{2,2k+1}$ become available; see~\cite{MR3649355}.

\subsection{Basic Construction}

A Seifert surface for  $T_{2,2k+1}$ is constructed  by attaching $2k$ once twisted bands to a disk.  Figures~\ref{fig:schematic1} and~\ref{fig:schematic2} are schematics representations of Seifert surfaces for $T_{2, 13} \cs -2T_{2,5}$ and $T_{2, 13} \cs -3T_{2,5}$.  Curves drawn on the surface  represent unlinks with 0 framing on the surfaces.  (In the first  illustration there are eight such curves; each one goes over a band on the top and a band on the bottom.  In the second illustration there are 10 such curves.)  The surfaces can be surgered in $B^4$ to yield surfaces of lower genus than the Seifert surfaces, thus giving upper bounds on the four-genus of the knots.  

\begin{figure}[h]
\labellist
\endlabellist
\includegraphics[scale=.9]{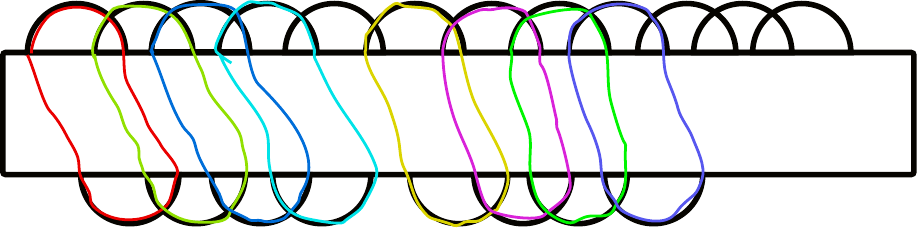} 
\caption{A schematic of a Seifert surface for $T_{2,13} - 2T_{2,5}$ and surgery curves.}
\label{fig:schematic1}
\end{figure}

\begin{figure}[h]
\labellist
\endlabellist
\includegraphics[scale=.9]{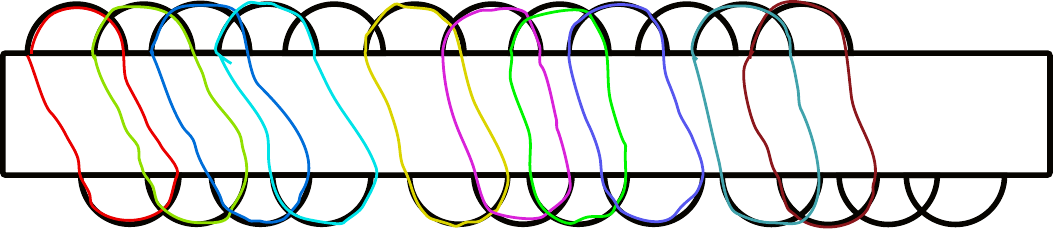} 
\caption{A schematic of a Seifert surface for $T_{2,13} - 3T_{2,5}$ and surgery curves.}
\label{fig:schematic2}
\end{figure}

\subsection{Application to $T_{2,2n+1} -aT_{2,2k+1}$.}

The two figures illustrate two possibilities   for connected sums $T_{2,2n+1} \cs - aT_{2,2k+1}$, where $n>k$.  In each case, there is an elementary computation of the genus of the resulting surface in $B^4$.  With a bit of experimenting, one might suspect that the minimum four-genus of $T_{2,2n+1} \cs - aT_{2k+1}$ is achieved when $a$ is close to  $\alpha = \lfloor\frac{2n+1}{2k+1}\rfloor$.  Here we present the computation for the two values of $a$ closest to $\alpha$. Once we consider signatures, we will prove that one of these surfaces    realizes  the minimum.  As mentioned earlier, it is not always the case the $\min \{ d(aT_{2,2k+1}, bT_{2n+1})\ \big| \ a, b \ne 0\}$ is realized  with $b= 1$.

\begin{theorem} \label{thm:b1}  For $0 < k <n$,  let $\alpha = \lfloor\frac{2n+1}{2k+1}\rfloor$.\begin{enumerate}
\item  $ g_4(T_{2,2n+1} \cs - \alpha T_{2k+1} ) \le n - \alpha k.$
\item If $2k+1$ does not divide $2n+1$, then  $ g_4(T_{2,2n+1} \cs - (\alpha +1) T_{2k+1} )  \le   \alpha  (k+1)  +k- n .$
\end{enumerate}
\end{theorem}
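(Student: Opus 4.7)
The plan is to prove both upper bounds by constructing explicit surfaces in $B^4$ of the claimed genus, starting from a Seifert surface of the knot and doing ambient surgery along a family of disjoint $0$-framed unknots; the construction is schematically depicted in Figures~\ref{fig:schematic1} and~\ref{fig:schematic2}.

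I would use the standard Seifert surface of $T_{2,2m+1}$ realized as a disk with $2m$ once-twisted bands, so that all bands of $T_{2,2n+1}$ carry positive twists and all bands of each copy of $-T_{2,2k+1}$ carry negative twists. Boundary-connect-summing gives a Seifert surface $F_\beta$ for $T_{2,2n+1}\cs -\beta T_{2,2k+1}$ consisting of a central disk with $2n$ positive bands and $2\beta k$ negative bands attached around its boundary, and of genus $n+\beta k$. The geometric key is that a simple closed curve on $F_\beta$ built from one positive band and one negative band joined by two arcs across the central disk has $0$ surface framing (the $+1$ and $-1$ twists cancel) and is an unknot in $S^3$; hence it bounds a properly embedded disk in $B^4$. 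A family of such curves whose joining arcs on the central disk are pairwise disjoint gives disjoint embedded capping disks in $B^4$, and ambient surgery drops the genus of the bounding surface by one per curve.

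For part (1) take $\beta=\alpha$. Since $\alpha\ge 1$ and $\alpha(2k+1)\le 2n+1$, we have $\alpha k\le n$, so each of the $2\alpha k$ negative bands can be paired with a distinct positive band using non-crossing arcs on the central disk. Performing the resulting $2\alpha k$ surgeries produces a surface of genus $(n+\alpha k)-2\alpha k=n-\alpha k$, as claimed.

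For part (2) take $\beta=\alpha+1$; the target genus $\alpha(k+1)+k-n$ is reached by exhibiting $2n-\alpha$ disjoint $0$-framed unknotted surgery curves on $F_{\alpha+1}$, since $(n+(\alpha+1)k)-(2n-\alpha)=\alpha(k+1)+k-n$. This is the main obstacle of the proof: the non-crossing requirement on the joining arcs prevents a naive use of all available band pairs, and the family must be laid out explicitly as in Figure~\ref{fig:schematic2}. I would construct it in two stages: first, pair all $2\alpha k$ negative bands from the first $\alpha$ copies of $-T_{2,2k+1}$ with $2\alpha k$ distinct positive bands exactly as in part (1); second, pair $2n-\alpha-2\alpha k$ of the $2k$ negative bands from the last copy with $2n-\alpha-2\alpha k$ of the remaining $2n-2\alpha k$ positive bands. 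Both the non-negativity of $2n-\alpha-2\alpha k$ and the bound $2n-\alpha-2\alpha k\le 2k$ follow from the hypothesis $(\alpha+1)(2k+1)\ge 2n+2$, equivalent to $2k+1\nmid 2n+1$. Performing all $2n-\alpha$ surgeries then yields a surface of the required genus; the checkable geometric content left out of the plan is the realizability of these $2n-\alpha$ pairings by pairwise disjoint arcs on the central disk, which the explicit schematic in Figure~\ref{fig:schematic2} makes evident.
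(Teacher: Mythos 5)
Your construction is correct and is essentially the paper's proof: the same boundary-connect-summed Seifert surface (disk with $2n$ positive and $2\beta k$ negative bands), the same $0$-framed unlinked surgery curves each running over one positive and one negative band, and the same choice $\beta=\alpha$ resp.\ $\alpha+1$ with the $\alpha-1$ resp.\ $\alpha$ ``gap'' bands making the arcs disjoint; your count (initial genus minus number of surgery curves) is equivalent to the paper's count of free bands. One small slip that does not affect the result: the inequality $(\alpha+1)(2k+1)\ge 2n+2$ is automatic from the definition of the floor (hence not equivalent to $2k+1\nmid 2n+1$) and yields only the upper bound $2n-\alpha(2k+1)\le 2k-1$, whereas the non-negativity $2n-\alpha(2k+1)\ge 0$ is exactly what the non-divisibility hypothesis supplies.
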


\begin{proof}
The genus of the surface before the surgery is one half the total number of bands.  Surgery reduces the genus by the number of surgery curves, which is one half the number of bands that have surgery curves going over them.  We call the bands that don't have surgery curves going over them  {\it free bands}.  Thus, the genus after surgery is one half the number of free bands.  For instance, in Figure~\ref{fig:schematic1} there are four such bands, all on top.  In Figure~\ref{fig:schematic2} there are also four such bands, two on top and two on the bottom.

In the first case, all the free bands are on top.  There are two types: those resulting from gaps and those at the end.  The count is $(\alpha -1) + (2n - \big(\alpha (2 k+1) -1)\big)$
 
In the second case, there are $\alpha  $ free bands   on the top (this uses the fact that $2k+1$ does not divide $2n +1$).  On the bottom there are $2k - \big( 2n  -  \alpha (2k+1) \big) $ free bands.  The result now follows from an algebraic simplification.
\end{proof}

\section{Torus knots, $T_{2,2k+1}$:   signature results.}\label{sec:tpq}

The signature function provides strong bounds on the four-genus of knots.  For general torus knots these bounds are generally weaker than those attained through gauge theoretic methods; for instance,  in~\cite{MR1241873} Seiberg-Witten theory  was used to  resolve the  Milnor Conjecture related to $g_4(T_{p,q})$.  Heegaard Floer theory similarly applies in this setting~\cite{MR2026543}, as do methods arising from Khovanov invariants~\cite{MR2729272}.  On the other hand, many results  concerning difference of torus knots seem to be unavailable to methods other than the signature; see~\cite{MR3784267} for a further discussion.

\subsection{Signature functions}
In order to simplify our calculations, rather than work with the (two-sided averaged) Levine-Tristram signature function, $\sigma_K(\omega)$, we will normalize and define $\sigma'_K(\omega) =-\sigma_K(\omega) /2 $.  To further simplify notation, instead of working with  unit complex numbers on the upper half-circle, $\omega$, we will change variables so that domain is  $[0,1]$ by setting  $\omega = e^{ \pi i t }$.  The next well-known result follows from the work of Tristram~\cite{MR0248854} and Viro~\cite{MR0370605}.

\begin{proposition}    For all knots $K$ and for all $t \in [0,1]$, $g_4(K) \ge \big| \sigma'_K(t)\big|.$
\end{proposition}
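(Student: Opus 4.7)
The plan is to reduce the inequality to a linear-algebra bound on a Hermitian form that admits an isotropic subspace coming from the surface realizing $g_4(K)$. Let $V$ be a $2h\times 2h$ Seifert matrix for $K$, arising from a Seifert surface $\Sigma$ of genus $h$. Writing $\omega = e^{\pi i t}$, consider the Hermitian form
\[
H(\omega) \defeq (1-\omega)V + (1-\bar\omega)V^T.
\]
At values where $\det H(\omega)\ne 0$, the Levine-Tristram signature is $\sigma_K(\omega) = \operatorname{sign} H(\omega)$; at the finitely many remaining $\omega$, the averaged signature is the mean of the two one-sided limits. With the normalization $\sigma'_K = -\sigma_K/2$, the claim reduces to $|\operatorname{sign} H(\omega)| \le 2\,g_4(K)$.

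For the geometric input, let $F \subset B^4$ be a smoothly embedded oriented surface of genus $g = g_4(K)$ with $\partial F = K$. Pushing $\Sigma$ slightly into $B^4$ and gluing to $F$ along $K$ produces a closed surface $\hat F \subset B^4$ of genus $h+g$. The standard metabolizer construction underlying the Murasugi-Tristram inequality, which uses the simple connectivity of $B^4$ applied to the complement of $\hat F$, produces a rank-$(h-g)$ subspace $L \subset H_1(\Sigma;\Q) \cong \Q^{2h}$ represented by disjoint simple closed curves on $\Sigma$ whose pushoffs cobound disjoint embedded surfaces in $B^4 \setminus \Sigma$. The Seifert pairing of any two such curves then vanishes, so $V$ restricts to zero on $L$, and hence so does $H(\omega)$ on $L \otimes \C$. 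Because a Hermitian form on $\C^n$ with an isotropic subspace of complex dimension $k$ has $|\operatorname{sign}| \le n - 2k$, taking $n = 2h$ and $k = h - g$ yields $|\operatorname{sign} H(\omega)| \le 2g$ for every regular $\omega$. Each one-sided limit at a jump obeys the same bound, so the averaged value does too, and dividing by $2$ gives $|\sigma'_K(\omega)| \le g_4(K)$.

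The main technical obstacle is producing the metabolizer $L$; this is the classical content of the Murasugi-Tristram inequality and is not automatic from the mere existence of $F$. The remaining Hermitian linear-algebra fact and the passage to averaged values at the finitely many jumps are routine. In the actual write-up it would be cleanest to simply invoke Tristram~\cite{MR0248854} and Viro~\cite{MR0370605}, since the result is labeled well-known in the statement.
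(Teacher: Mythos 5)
Your proposal is correct in outline, but be aware that the paper offers no proof here at all: it simply records the statement as a well-known consequence of Tristram~\cite{MR0248854} and Viro~\cite{MR0370605}, which is exactly what your final sentence recommends doing. Your sketch of the underlying argument is the standard one and is sound: push the Seifert surface $\Sigma$ into $B^4$, cap off with the genus-$g$ surface $F$ to form $\hat F$, take a Seifert $3$--manifold $M$ for $\hat F$ in $B^4$ (this exists because $H_1$ of the complement is $\Z$), and intersect the half-rank subspace $\ker\big(H_1(\hat F;\Q)\to H_1(M;\Q)\big)$ with $H_1(\Sigma;\Q)$ to obtain the rank-$(h-g)$ subspace on which the Seifert form vanishes. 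One small imprecision: the linear-algebra fact that a Hermitian form on $\C^n$ with a $k$--dimensional totally isotropic subspace has $|\operatorname{sign}|\le n-2k$ requires the form to be nondegenerate (the form $\mathrm{diag}(1,0)$ with $k=1$ is a counterexample otherwise); since you only apply it at regular $\omega$, where $\det H(\omega)\neq 0$, and then pass to one-sided limits at the finitely many singular values, the argument as applied is fine, but the lemma should be stated with that hypothesis.
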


In light of this, we define the function that maximizes this bound.

\begin{definition}
For a knot $K$, $S(K) = \max_{0 \le t \le 1}\{\big| \sigma'_K(t) \big| \}$.  
\end{definition}

\subsection{Signature functions of $T_{2,2k+1}$.}  A standard result for $2$--stranded torus knot is as following. 
\begin{proposition}  
If $1 \le j \le k$ and $ \frac{2j-1}{2k+1} < t < \frac{2j+1}{2k+1}$, then $\sigma'_{T_{2,2k+1}}(t) = j$. If $t < \frac{1}{2k +1}$  then $\sigma'_{T_{2,2k+1}}(t) = 0$
\end{proposition}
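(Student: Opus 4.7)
The plan is to combine three standard facts about the Levine--Tristram signature: the locations of its jump discontinuities, the size of each jump, and the boundary values at $t=0$ and $t=1$.

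First I would locate the jumps. The function $\sigma'_K$ is locally constant on $[0,1]$ away from those $t$ for which $\omega = e^{\pi i t}$ is a root of the Alexander polynomial $\Delta_K$, and at each simple root $\omega_0$ the Hermitian form $(1-\omega)V + (1-\bar\omega)V^T$ (for $V$ a Seifert matrix) acquires a one-dimensional null space, producing a jump of $\pm 2$ in $\sigma_K$, so a jump of $\pm 1$ in $\sigma'_K$. For $T_{2,2k+1}$ the Alexander polynomial is $\Delta(x) = (x^{2k+1}+1)/(x+1)$, whose roots are the $(2k+1)$-th roots of $-1$ other than $-1$ itself. Under $\omega = e^{\pi i t}$, the roots in the upper half-circle are precisely $t = (2j-1)/(2k+1)$ for $j = 1, \ldots, k$, and each is simple. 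Hence $\sigma'_{T_{2,2k+1}}$ is a step function with exactly $k$ jumps of size $\pm 1$, at the points $t_j = (2j-1)/(2k+1)$.

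To fix the signs I would use the two endpoints. On the leftmost interval, $\sigma'_{T_{2,2k+1}}(t) = 0$ since $(1-\omega)V + (1-\bar\omega)V^T \to 0$ as $\omega \to 1$, forcing signature zero in a right-neighborhood of $0$ and hence on $(0, 1/(2k+1))$ by local constancy. At the other end, $\sigma'_{T_{2,2k+1}}(1) = -\sigma(T_{2,2k+1})/2 = k$, using the classical signature computation $\sigma(T_{2,2k+1}) = -2k$, which can be read off directly from the standard tridiagonal Seifert form for $T_{2,2k+1}$. A step function that begins at $0$, ends at $k$, and has exactly $k$ jumps of size $\pm 1$ must increase by $+1$ at every jump; this yields $\sigma'_{T_{2,2k+1}}(t) = j$ on the interval $(t_j, t_{j+1})$, which is exactly the stated formula.

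The main technical input is the classical value $\sigma(T_{2,2k+1}) = -2k$; once that is in hand the remainder is a one-line interpolation argument between the two boundary values. An alternative would be to compute the sign of each jump locally from the kernel direction of the degenerate Hermitian form at each $\omega_j$, which is also tractable because all roots are simple, but the endpoint approach is shorter and less prone to sign bookkeeping.
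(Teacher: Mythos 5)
Your argument is correct. The paper states this proposition without proof, labelling it "a standard result," so there is no in-text argument to compare against; what you have written is the standard derivation, and it is sound. One small remark: your interpolation step is even more robust than you suggest. You do not need to establish that the signature actually jumps by $\pm 2$ at each simple root of the Alexander polynomial (ruling out an eigenvalue touching zero and returning); it suffices that $\sigma'$ is locally constant off the $k$ points $t_j=(2j-1)/(2k+1)$ and that each jump has absolute value at most $1$, since starting at $0$ and ending at $k=\sigma'(1)$ with at most $k$ jumps of size at most $1$ already forces every jump to equal $+1$. The only external inputs are the value $\sigma(T_{2,2k+1})=-2k$, which is immediate from the negative-definiteness of the symmetrized tridiagonal Seifert form, and the identification of the unit-circle roots of $(x^{2k+1}+1)/(x+1)$, both of which you handle correctly.
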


It is clear from Theorem~\ref{thm:b1} that in studying the difference $   bT_{2n+1} \cs -aT_{2,2k+1}$, special care is required in the special case that $\frac{2n+1}{2k+1}$ is an integer.  Because of this, the  floor function arises naturally in the calculations,  but it has to be slightly modified.   

\begin{definition} We will write   $\lfloor \lfloor x \rfloor \rfloor$  for the function that equals $  \lfloor x \rfloor $  if $x$ is not an integer and  $  \lfloor x \rfloor - 1$  if $x \in \Z$.  Alternatively, $\lfloor \lfloor x \rfloor \rfloor = -   \lfloor -x \rfloor  -1$. 
\end{definition}

A simple calculation now  yields the following result.

\begin{corollary}\label{corrr:sig} Let $K =bT_{2,2n+1} \cs - aT_{2,2k+1}$ with $0< k<n$ and $a, b >0$.    For sufficiently small $\epsilon$ with $\epsilon >0$ we have:
\begin{enumerate}
\item  $\sigma'_K(\frac{1}{2k+1} - \epsilon) =  a \lfloor \lfloor \frac{1}{2}\frac{2n+1}{2k+1} +\frac{1}{2} \rfloor\rfloor \ge a$.\vskip.1in

\item  $\sigma'_K(1) = bn  -ak  $.\vskip.1in

\item  $\sigma'_K(\frac{2k-1}{2k+1} +  \epsilon) =bn -ak    -b \lfloor  \lfloor \frac{2n+1}{2k+1}  \rfloor \rfloor$.

\end{enumerate}
\end{corollary}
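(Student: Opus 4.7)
The plan is to exploit additivity of the Levine-Tristram signature under connected sum together with the sign-reversal under mirroring, and then to read off each of the three values from the jump data of $\sigma'_{T_{2,2m+1}}$ given in the preceding proposition. Concretely, since $-T_{2,2k+1}$ is the mirror of $T_{2,2k+1}$, we have
\[
\sigma'_K(t) \;=\; b\,\sigma'_{T_{2,2n+1}}(t)\;-\;a\,\sigma'_{T_{2,2k+1}}(t)
\]
for all $t \in [0,1]$, and all three parts reduce to evaluating the two summands at the indicated $t$.

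For part (2), $t=1$ lies above every jump $\frac{2j+1}{2m+1}$ with $j<m$, so $\sigma'_{T_{2,2k+1}}(1)=k$ and $\sigma'_{T_{2,2n+1}}(1)=n$, giving $\sigma'_K(1)=bn-ak$ immediately. For part (1), $t=\frac{1}{2k+1}-\epsilon$ lies strictly below $\frac{1}{2k+1}$, so the proposition gives $\sigma'_{T_{2,2k+1}}(t)=0$; for the $T_{2,2n+1}$ term I would solve the inequality $\frac{2j-1}{2n+1} < \frac{1}{2k+1}-\epsilon < \frac{2j+1}{2n+1}$, which rearranges to $j < \tfrac{1}{2}\tfrac{2n+1}{2k+1}+\tfrac12 - \tfrac{(2n+1)\epsilon}{2}$, hence $j = \lfloor \tfrac{1}{2}\tfrac{2n+1}{2k+1}+\tfrac12 \rfloor$ in the generic case. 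The lower bound $j\ge 1$ follows from $n>k$.

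For part (3), $t=\frac{2k-1}{2k+1}+\epsilon$ lies in the top interval $(\frac{2k-1}{2k+1},1)$ for $T_{2,2k+1}$, so $\sigma'_{T_{2,2k+1}}(t)=k$. For the $T_{2,2n+1}$ term, I would set $j = n - \lfloor\lfloor \frac{2n+1}{2k+1}\rfloor\rfloor$ and verify the defining inequalities; using $\frac{2k-1}{2k+1} = 1 - \frac{2}{2k+1}$ the problem reduces to checking $m < \frac{2n+1}{2k+1} < m+1$, which exactly characterizes $m = \lfloor \frac{2n+1}{2k+1}\rfloor$ in the generic case. Combining then yields the stated formula $bn - ak - b\lfloor\lfloor \frac{2n+1}{2k+1}\rfloor\rfloor$.

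The one place where care is genuinely required -- and what I expect to be the main obstacle -- is the divisibility case when $(2k+1)\mid (2n+1)$ (equivalently $\frac{2n+1}{2k+1}\in\Z$): here $\frac{1}{2k+1}$ and $\frac{2k-1}{2k+1}$ become exact jump points of $\sigma'_{T_{2,2n+1}}$, and the naive floor $\lfloor\cdot\rfloor$ overcounts by one. In (1), the point $\frac{1}{2k+1}-\epsilon$ falls just below such a jump, so the true value of $j$ is one less than the unmodified floor; in (3), the same phenomenon occurs at $\frac{2k-1}{2k+1}$ (as one checks by multiplying through by $2n+1$). This is precisely what the modified floor $\lfloor\lfloor\cdot\rfloor\rfloor$ is designed to record, and once that book-keeping is in place the three formulas follow from the direct substitutions above.
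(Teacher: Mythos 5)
Your proof is correct and is exactly the ``simple calculation'' the paper intends: decompose $\sigma'_K$ additively, read off each summand from the jump data of $\sigma'_{T_{2,2m+1}}$, and use the modified floor $\lfloor\lfloor\cdot\rfloor\rfloor$ to absorb the case $(2k+1)\mid(2n+1)$, where the evaluation point sits exactly at a jump of $\sigma'_{T_{2,2n+1}}$. One remark: your own computation in part (1) yields $\sigma'_K(\tfrac{1}{2k+1}-\epsilon)=b\,\lfloor\lfloor \tfrac{1}{2}\tfrac{2n+1}{2k+1}+\tfrac12\rfloor\rfloor\ \ge b$, with coefficient $b$ rather than the $a$ printed in the statement; this is the correct value (the paper's later applications, e.g.\ the bound $0<b\le N$ and the $|2b|$ term in the $T_{2,23}$ example, all use the $b$-weighted version), so you should state it explicitly and flag the printed $a$ as a typo rather than leaving the final coefficient implicit.
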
 

\begin{corollary}
For fixed values of $k$ and $n$ with $0 < k <n$ and for every $N >0$, the set of pairs $a $ and $b$ such that $S(bT_{2, 2n+1} \cs -  aT_{2,2k+1}) \le N $ is finite.
\end{corollary}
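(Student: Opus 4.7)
My plan is to show that each of $|a|$ and $|b|$ is bounded above in terms of $N$, so that only finitely many pairs can satisfy the hypothesis.

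First I would reduce to the case $a, b \ge 0$. Since $\sigma'_{-K}(t) = -\sigma'_K(t)$, replacing either summand by its concordance inverse does not change $S$, so we may assume $a \ge 0$ and $b \ge 0$. The boundary cases $a = 0$ or $b = 0$ are easy: then $K$ is a positive multiple of a single torus knot $T_{2,2k+1}$ or $T_{2,2n+1}$, and the value $\sigma'_K(1) = ak$ (respectively $bn$) grows linearly in the nonzero coefficient, so $S(K) \le N$ forces that coefficient to be bounded.

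For the main case $a, b > 0$, I would invoke the three evaluations in Corollary~\ref{corrr:sig} directly. Item~(1) gives
\[
S(K) \;\ge\; \bigl|\sigma'_K(\tfrac{1}{2k+1}-\epsilon)\bigr| \;\ge\; a,
\]
so $a \le N$. Items~(2) and~(3) combine to yield
\[
b\,\bigl\lfloor\!\bigl\lfloor \tfrac{2n+1}{2k+1}\bigr\rfloor\!\bigr\rfloor \;=\; \sigma'_K(1) - \sigma'_K\!\bigl(\tfrac{2k-1}{2k+1}+\epsilon\bigr),
\]
so
\[
b\,\bigl\lfloor\!\bigl\lfloor \tfrac{2n+1}{2k+1}\bigr\rfloor\!\bigr\rfloor \;\le\; 2 S(K) \;\le\; 2N.
\]
The hypothesis $0 < k < n$ gives $\tfrac{2n+1}{2k+1} > 1$, and a case check (integer versus non-integer value) shows $\lfloor\!\lfloor \tfrac{2n+1}{2k+1}\rfloor\!\rfloor \ge 1$, whence $b \le 2N$.

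There is no serious obstacle here; the corollary does all the work. The only point requiring mild care is verifying that $\lfloor\!\lfloor \tfrac{2n+1}{2k+1}\rfloor\!\rfloor \ge 1$ under the standing assumption $n > k$, and handling the trivial edge cases where one of $a, b$ vanishes before applying the corollary (which itself assumed $a, b > 0$).
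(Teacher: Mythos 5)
Your overall strategy (bound $a$ and $b$ separately in terms of $N$) is the same as the paper's, but one of your two key steps rests on what is in fact a typo in the printed statement of Corollary~\ref{corrr:sig}, and once that is corrected your argument no longer bounds $a$ at all. At $t = \frac{1}{2k+1}-\epsilon$ the signature of $T_{2,2k+1}$ vanishes, so $\sigma'_K$ there can only see the $T_{2,2n+1}$ summands: the correct value is $b\,\lfloor\lfloor \frac{1}{2}\frac{2n+1}{2k+1}+\frac{1}{2}\rfloor\rfloor \ge b$, not $a\,\lfloor\lfloor\cdots\rfloor\rfloor \ge a$. You can confirm this from the paper's own later computations (e.g.\ the lower bound $\max\{|2b|,|11b-3a|,|9b-3a|\}$ for $bT_{2,23}\cs -aT_{2,7}$, whose first entry is $2b$) and from the paper's proof of this very corollary, which reads off $0<b\le N$ from condition~(1). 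So item~(1) bounds $b$; your combination of items (2) and (3) bounds $b$ a second time (that computation is fine: the difference of the two evaluations is $b\lfloor\lfloor\frac{2n+1}{2k+1}\rfloor\rfloor$, and the double floor is at least $1$ because $\frac{2n+1}{2k+1}>1$); and nothing in your argument controls $a$.

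The repair is short and is exactly the paper's second step: once $b$ is confined to finitely many values, item~(2) gives $|bn-ak|\le N$, so for each fixed $b$ the integer $a$ lies in an interval of length $2N/k$ about $bn/k$; hence only finitely many pairs occur. (Item~(3) works equally well.) A smaller point: your sign reduction ``replacing either summand by its concordance inverse does not change $S$'' is only valid when you invert the whole connected sum, i.e.\ it handles $(a,b)\mapsto(-a,-b)$ but not $(a,b)\mapsto(-a,b)$; in the mixed-sign case $K$ is a positive combination of positive torus knots and $\sigma'_K(1)=bn+|a|k$ bounds both coefficients directly, so that case is harmless but needs its own sentence rather than the stated symmetry.
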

\begin{proof}Suppose that  $S(bT_{2, 2n+1} \cs -  aT_{2,2k+1}) <N $. Condition (1) of Corollary~\ref{corrr:sig} implies that $0< b \le  N$. For each value of $b$ in that interval, Condition (3) implies that the set of possible values of $a$ is also finite.
\end{proof}

\subsection{The case of $b=1$:  minimizing $g_4(T_{2,2n+1}  \cs -  aT_{2k+1})$}

\begin{theorem}\label{thm:a=1}  Let $\alpha =  \lfloor  \frac{2n+1}{2k+1}   \rfloor$.  The minimum value of $g_4(T_{2,2n+1} - aT_{2k+1})$ is achieved when $a = \alpha$ or $\alpha +1$, with the two possible values given by 
\begin{itemize}
\item  $g_4(T_{2,2n+1} \cs -  \alpha T_{2k+1})=  n - \alpha k $.\vskip.1in

\item  $g_4(T_{2,2n+1} \cs -  (\alpha+1)  T_{2k+1}  )=  (\alpha +1)(k+1) - n -1 $.\vskip.1in
\end{itemize}
\end{theorem}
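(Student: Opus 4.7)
The plan is to prove matching upper and lower bounds for the two listed values of $g_4$ and then exclude every other choice of $a$. Throughout, set $K_a = T_{2,2n+1} \cs - a T_{2,2k+1}$; the signature computations all come from Corollary~\ref{corrr:sig} specialized to $b = 1$.

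For $a = \alpha$, Theorem~\ref{thm:b1}(1) gives the upper bound $g_4(K_\alpha) \le n - \alpha k$, and Corollary~\ref{corrr:sig}(2) gives $\sigma'_{K_\alpha}(1) = n - \alpha k$, which is non-negative because $\alpha(2k+1) \le 2n+1$ forces $\alpha k \le n$; this provides the matching lower bound. For $a = \alpha+1$, assuming first that $2k+1 \nmid 2n+1$, Theorem~\ref{thm:b1}(2) supplies the upper bound after rewriting $\alpha(k+1)+k-n$ as $(\alpha+1)(k+1)-n-1$, while Corollary~\ref{corrr:sig}(3) applied at $t = \tfrac{2k-1}{2k+1}+\epsilon$ evaluates to $n - (\alpha+1)k - \alpha$, whose absolute value is exactly $(\alpha+1)(k+1) - n - 1$, giving the matching lower bound.

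To verify that no other value of $a$ achieves a smaller $g_4$, I would apply the same two signature evaluations to $K_a$ in general. For $1 \le a \le \alpha - 1$, Corollary~\ref{corrr:sig}(2) yields $g_4(K_a) \ge n - ak \ge (n-\alpha k) + k$, strictly exceeding the value at $a = \alpha$. For $a \ge \alpha + 2$, in the generic case Corollary~\ref{corrr:sig}(3) yields $g_4(K_a) \ge ak + \alpha - n$, which rearranges to $(\alpha+1)(k+1) - n - 1 + (a - \alpha - 1)k$ and so strictly exceeds the value at $a = \alpha+1$. The cases $a = 0$ and $a < 0$ are handled by the elementary bound $g_4(K_a) \ge |\sigma'_{K_a}(1)| = |n - ak| \ge n$.

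The main obstacle is the divisible subcase $2k+1 \mid 2n+1$, where $\lfloor\lfloor(2n+1)/(2k+1)\rfloor\rfloor = \alpha - 1$ rather than $\alpha$, so the signature bound from Corollary~\ref{corrr:sig}(3) at $a = \alpha+1$ produces only $k + (\alpha-1)/2$, falling short of the claimed value $(\alpha+1)(k+1)-n-1 = k + (\alpha+1)/2$ by one. Closing this gap will require either a refined signature computation exploiting the simultaneous jumps of $\sigma'_{T_{2,2n+1}}$ and $\sigma'_{T_{2,2k+1}}$ at the points $t = (2i-1)/(2k+1)$, or a companion modification of the band-surgery construction in Theorem~\ref{thm:b1}(2) adapted to the divisible case. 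Outside of this subtlety, the proof reduces to routine bookkeeping combining Theorem~\ref{thm:b1} with Corollary~\ref{corrr:sig}.
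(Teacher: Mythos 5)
Your proof follows the same route as the paper's: upper bounds from the band-surgery construction of Theorem~\ref{thm:b1} and matching lower bounds from the signature evaluated at $t=1$ (for $a \le \alpha$) and at $t = \frac{2k-1}{2k+1}+\epsilon$ (for $a \ge \alpha+1$), so this is essentially the published argument, carried out with somewhat more care. The divisible subcase you flag is a genuine loose end --- the paper's own proof silently writes $\lfloor\frac{2n+1}{2k+1}\rfloor$ where Corollary~\ref{corrr:sig}(3) actually gives the double floor $\alpha-1$ --- but it affects only the exact value of $g_4$ at $a=\alpha+1$ when $2k+1 \mid 2n+1$; in that case the minimum is already achieved at $a=\alpha$ with the strictly smaller value $n-\alpha k=(\alpha-1)/2$, so the location and value of the minimum are unaffected.
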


\begin{proof}
Let $F(a)$ denote the signature function bound on  $g_4(T_{2,2n+1}  \cs -  a T_{2k+1})$.  We consider two cases:  $a \le \alpha$ and $a \ge \alpha +1$.\medskip

\noindent{\bf Case 1: $a \le \alpha$.}  In this case, we consider the signature at $t = 1$.   Noting that $n-ak \ge 0$ we find that $F(a) \ge n -ak$.  This bound is realized at $a = \alpha$, so we have $F(\alpha) = n - \alpha k$ and $F(a) > F(\alpha)$ for all $a < \alpha$.\medskip

\noindent{\bf Case 2: $a \ge \alpha +1$.}  In this case, we consider the signature at $t = \frac{2k-1}{2k+1} + \epsilon$ for some small $\epsilon$.  In this case we have 
$F(a) \ge  ak  - n +   \lfloor  \frac{2n+1}{2k+1}   \rfloor$.  This bound is realized when $a = \alpha +1$.

Now we can combine these two cases.  If $S(\alpha) \le S(\alpha +1)$, then $S(\alpha) \le S(a)$ for all $a$ and  $g_4(T_{2,2n+1} \cs -  a T_{2k+1})$ is minimized at $a - \alpha$. Similarly if $S(\alpha + 1) \le S(\alpha)$.  
\end{proof}

\subsection{Basic examples} We begin with a few examples.

\begin{example} $\mathbf{ \text{\bf Min} \{ g_4(bT_{2,23} \cs -  aT_{2,7})\} = g_4( T_{2,23}  \cs -  3T_{2,7}) = 2.}
$ 
In this case we have  $n=11$ and $k =3$.  Then applying Corollary~\ref{corrr:sig} we find 
\[ 
g_4( bT_{2,23} - aT_{2,7}) \ge S(bT_{2,23} \cs -  aT_{2,7}) \ge  \max\{  \big| 2b \big|,   \big| 11b - 3a\big|, \big| 9b - 3a \big|\}.
\]

By Theorem~\ref{thm:b1} we have $g_4(T_{2,23} \cs -  3T_{2,7} )\le 2$.  Thus, the minimum for $g_4(bT_{2,23} - aT_{2,7} )$ must be realized by $b=1$.  In this case, we have 
\[ 
g_4(  T_{2,23} - aT_{2,7}) \ge S( T_{2,23} \cs -  aT_{2,7})\} \ge  \max\{  \big| 2 \big|,   \big| 11  - 3a\big|, \big| 9 - 3a \big|\}.
\]
The only value of $a$ for which this maximum is 2 is $a=3$.
\end{example}

\begin{example} $\mathbf{ \text{\bf Min}  \{g_4(bT_{2,17} \cs -  aT_{2,11}) = g_4( 2T_{2,17} \cs -  3T_{2,7}) = 2.}
$ Here we present a case in which $g_4(bT_{2,2n+1} \cs -  aT_{2k+1})$ is not realized by  $g_4(T_{2,2n+1} \cs -  a T_{2k+1})$ for any $a$.  
Let  $k=5$ and  $n=8$.     By Corollary~\ref{corrr:sig}  if we consider combinations of the form $ T_{2,17} \cs -  aT_{2,11}$ we have 
\[ S(T_{2,17} \cs -  aT_{2,11}) \ge \max \{ 1, \big|8 - 5a\big|, \big|7 - 5a\big| \}.
\]
For all values of $a$, this is always at least 3.  

For general $b$ we have
\[ S(bT_{2,17} \cs -  aT_{2,11}) \ge \max \{ b, \big|8b - 5a\big|, \big|7b - 5a\big| \}.
\]  Clearly, if $b\ge 3$ then the maximum is at least 3.

We next observe that  $g_4(2 T_{2,17} \cs -  3T_{2,11}) = 2$.  If we draw a schematic for this difference, there are two groups of 16 bands on the top and 3 groups of 10 bands on the bottom.  All 10 bands of the bottom left group can be surgered, as can the 10 bands of the bottom right group.  This leaves 5 bands free on each of the top two groups.  These can be combined with bands on the bottom central group to perform surgery on 9 more curves.  Thus, we have reduced the genus by $10 + 10 +9 = 29$.  Finally, $31-29 = 2$, yielding the minimum. 

If we consider combinations of the general  form $b T_{2,17} \cs -  aT_{2,11}$, then the signature   at $\frac{1}{11} - \epsilon$ is $b$, so we see that we need consider only the final case of $2 T_{2,17} \cs -  aT_{2,11}$.  In this case the signature   at $\frac{1}{11} - \epsilon$ is 2, so we will not find a value of $a$ for which the four-genus is 1. 

Figure~\ref{fig:schematic3}  illustrates two   of the signature functions that arise. (The $t$ axis is labelled from 0 to 1,000, indicating that the signature function was evaluated at points $i/1000$.)

\begin{figure}[h]
\labellist
\endlabellist
\includegraphics[scale=.2]{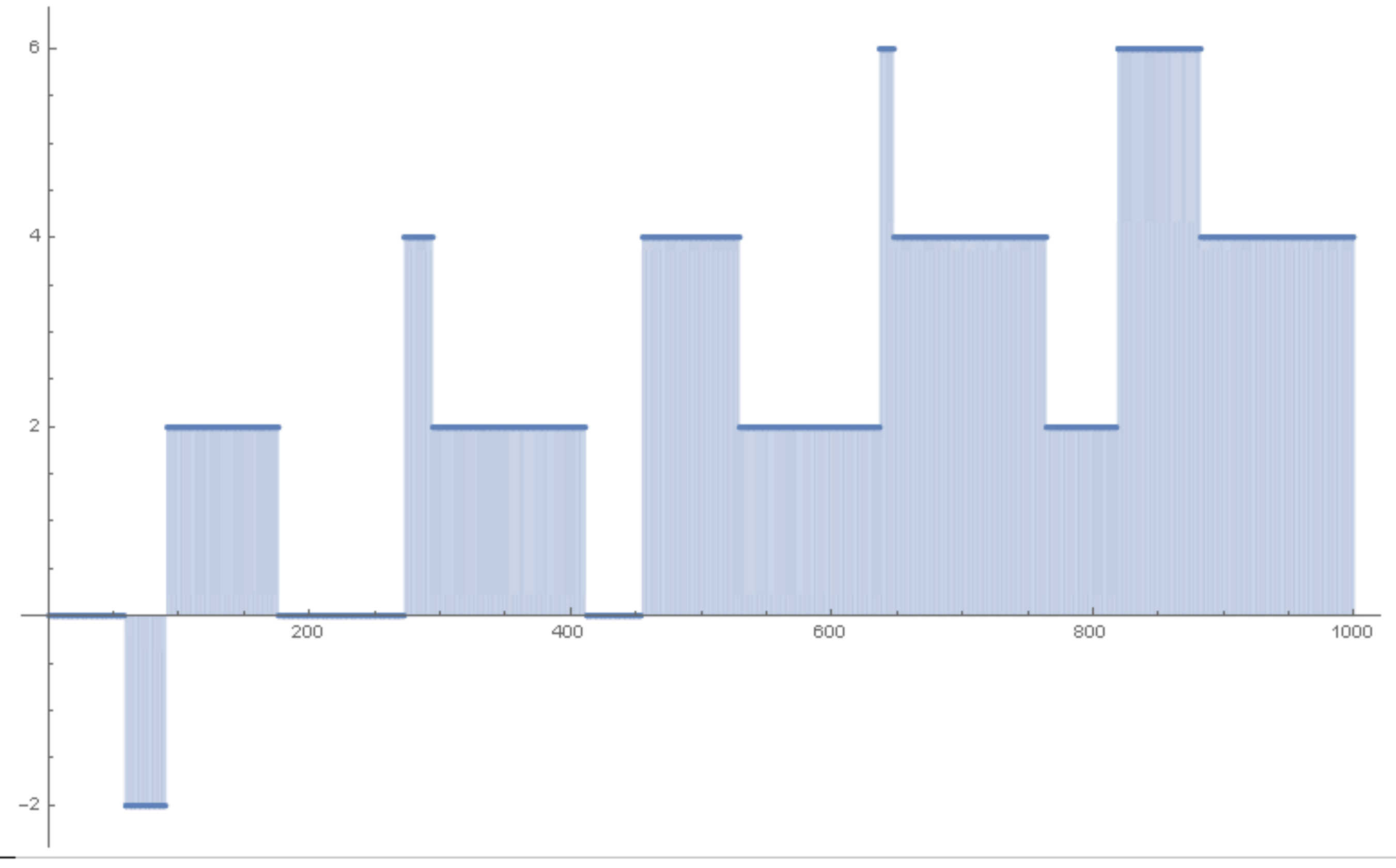} \hskip.5in   \includegraphics[scale=.2]{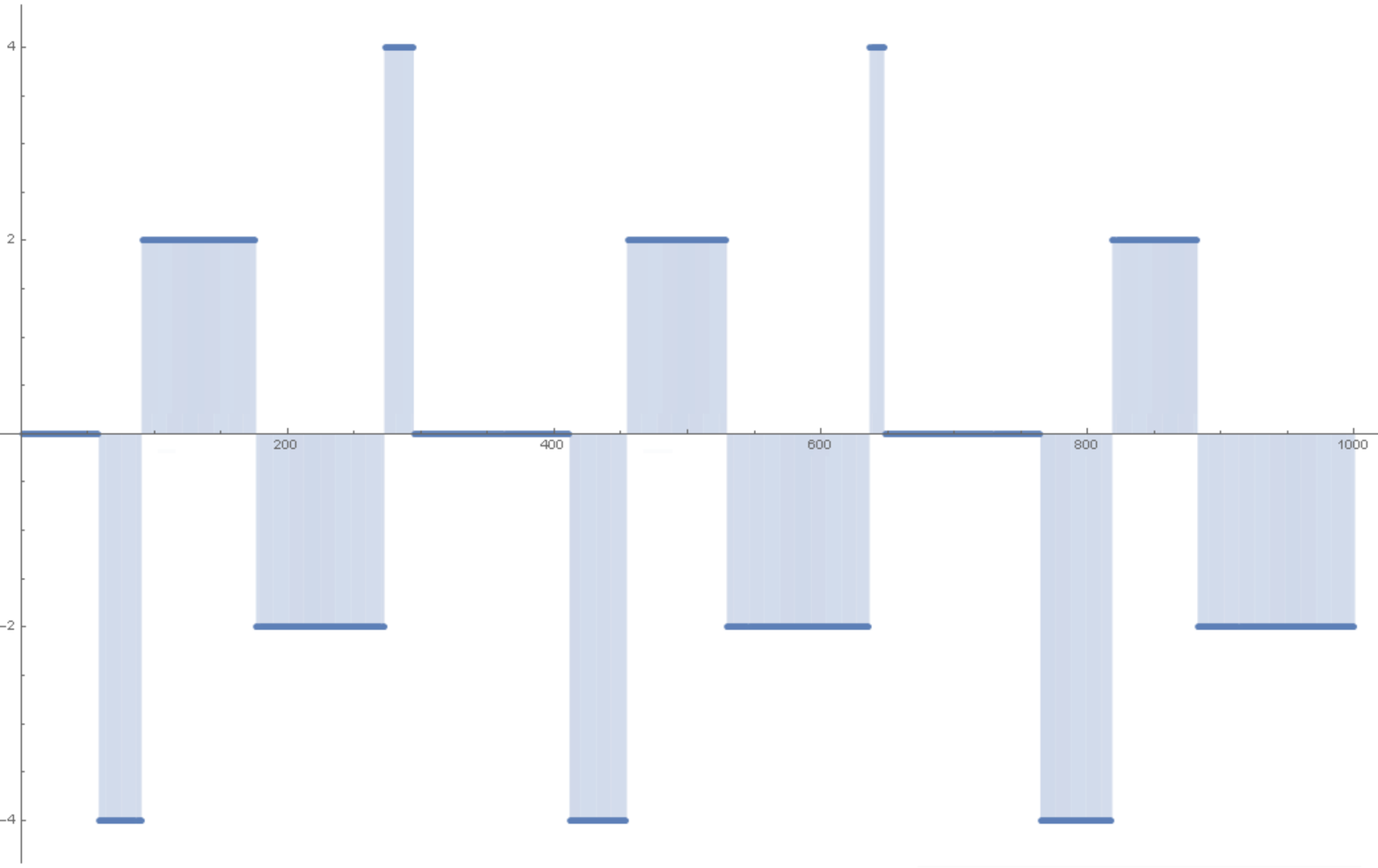} 
\caption{$\sigma_K(\omega)$ for $K =T_{2,17} \cs - 2T_{2,11}$ and  $K =2T_{2,17} \cs - 3T_{2,11}$.}
\label{fig:schematic3}
\end{figure}
\end{example}

\subsection{Failure of the triangle inequality}\label{ex:91}  Our final example is the most technical.  It will be used later to demonstrate the failure of the triangle inequality.

\begin{enumerate}

\item  Min$\{ g_4( bT_{2,61} \cs -  aT_{2,41})\} = 2$, realized by $g_4(2 T_{2,61}\cs -  3T_{2,41}) =2$

\item   Min$\{ g_4(bT_{2,91}\cs -  aT_{2,61})\} = 2$, realized by $g_4(2T_{2,91} \cs -  3T_{2,61}) = 2$.

\item  Min$\{ g_4(bT_{2,91}\cs -  aT_{2,41})\} = 5$, realized by $g_4( T_{2,91} \cs -  2T_{2,41}) = 5$.

\end{enumerate}

We now work through each case. 
\begin{enumerate}

\item  A construction similar to the one used to show  $g_4(2 T_{2,17} \cs -  3T_{2,11}) =2$ demonstrates that   $g_4(2 T_{2,61} \cs -  3T_{2,41}) \le 2$.  Thus, we need to show that 2 is the minimum.  Here $n= 30$ and $k=20$.  Applying Corollary~\ref{corrr:sig} we find 
\[
S(bT_{2,61} \cs -  aT_{2,41}) \ge  \max\{  \big| b \big|,   \big| 30b - 20a \big|, \big| 29b - 20a \big|\}.  
\]
It is now a trivial exercise to show this has minimum 2, realized when $b =2$ and $a=3$.
\item   
Showing that   $g_4(2T_{2,91} \cs -  3T_{2,61}) =2$ demonstrates that   $g_4(bT_{2,91}\cs -  aT_{2,61}) \le 2$.  Thus, we need only show that 2 is the minimum.

Here $n= 45$ and $k=30$.  Applying Corollary~\ref{corrr:sig} we find 
\[
S(bT_{2,91} \cs -  aT_{2,61}) \ge  \max\{  \big| a \big|,   \big| 45b - 30 a \big|, \big| 44b - 30a \big|\}.  
\]
It is again a trivial exercise to show this has minimum 2, realized when $b =2$ and $a=3$.

\item  The basic construction shows that   $g_4(  T_{2,91} \cs -   2T_{2,41}) \le 5$.   Thus, we need to show that 5 is the minimum.

Here $n= 45$ and $k=20$.  Applying Corollary~\ref{corrr:sig} we find 
\[
S(bT_{2,91} \cs - aT_{2,41}) \ge  \max\{  \big| 2b \big|,   \big| 45b - 20 a \big|, \big| 44b - 20a \big|\}.  
\]
We leave it to the reader to check that the minimum is 5,  realized when $b =1$ and $a=2$.

\end{enumerate}

 \subsection{The growth of $\overline{d}(\calt_{2,2k+1}, \calt_{2n+1})$}\label{sec:growth}    Let $\overline{d}(\calk, \calj) = \min \{ d(a\calk , b\calj) \ \big| \ a \ne 0 \ne b\}$.
 In Section~\ref{sec:2kc} we will describe in detail the distance $\delta$ on the projective space $\P(\calc)$ and will see that in the following theorem statement, $\overline{d}$ can be replaced with $\delta$.
 \begin{theorem}\label{thm:growth2}  For any fixed integer $k \ge 1$, 
 \[ 
 \lim_{n \to \infty}  \frac{\overline{d}(\calt_{2,2k+1},\calt_{2n+1})  }{n} = \frac{1}{2k+1}.
 \]
 \end{theorem}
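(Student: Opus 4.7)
The plan is to bracket $\overline{d}(\calt_{2,2k+1}, \calt_{2,2n+1})$ between two quantities each asymptotic to $n/(2k+1)$ as $n\to\infty$ with $k\ge 1$ fixed, then divide by $n$ and apply the squeeze theorem.

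For the upper bound, set $\alpha = \lfloor (2n+1)/(2k+1) \rfloor$. Theorem~\ref{thm:b1}(1) supplies an explicit surface realizing
\[
\overline{d}(\calt_{2,2k+1}, \calt_{2,2n+1}) \le g_4(T_{2,2n+1} \cs -\alpha T_{2,2k+1}) \le n - \alpha k.
\]
Since $\alpha \ge (2n+1)/(2k+1) - 1$, a short simplification gives $n - \alpha k \le (n + 2k^2)/(2k+1)$, so $\overline{d}/n \le 1/(2k+1) + O_k(1/n)$.

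For the lower bound, I must show that $g_4(bT_{2,2n+1} \cs -aT_{2,2k+1}) \ge (n-k)/(2k+1)$ for \emph{every} nonzero integer pair $(a,b)$. Since $g_4$ is invariant under negating the concordance class, I may take $b > 0$. If $a < 0$, then the signature at $t = 1$ of $bT_{2,2n+1} \cs + |a|T_{2,2k+1}$ equals $bn + |a|k \ge n + k$, already exceeding the upper bound for large $n$; so the minimum is attained with $a > 0$, where Corollary~\ref{corrr:sig} applies. Conditions (2) and (3) of that corollary provide
\[
S \ge |bn - ak| \quad \text{and} \quad S \ge |bn - ak - b\alpha'|,
\]
where $\alpha' = \lfloor\lfloor (2n+1)/(2k+1) \rfloor\rfloor$. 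Adding these and invoking the triangle inequality yields $2S \ge b\alpha' \ge \alpha'$, hence $\overline{d} \ge \alpha'/2 \ge (n-k)/(2k+1)$, so $\overline{d}/n \ge 1/(2k+1) - O_k(1/n)$.

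The squeeze theorem then closes the argument. The key subtlety is that the signature lower bound must hold \emph{uniformly} in $(a,b)$, not just for one well-chosen pair; this is exactly the point of combining conditions (2) and (3) of Corollary~\ref{corrr:sig}: the triangle-inequality trick removes the $a$-dependence entirely and isolates a bound that only scales favorably with the positive integer $b$. Everything else is routine asymptotic accounting.
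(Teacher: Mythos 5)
Your proof is correct, and its overall architecture — squeeze $\overline{d}/n$ between an upper bound from the explicit surfaces of Theorem~\ref{thm:b1}(1) and a signature lower bound from Corollary~\ref{corrr:sig} — is the same as the paper's. The one genuine difference is in how the lower bound is extracted from that corollary. The paper uses only Condition~(1): the signature just below $t=\tfrac{1}{2k+1}$, where $T_{2,2k+1}$ contributes nothing, gives $S \ge b\,\lfloor\lfloor \tfrac12\tfrac{2n+1}{2k+1}+\tfrac12\rfloor\rfloor \ge \tfrac{n}{2k+1}+O_k(1)$ in a single stroke, already independent of $a$. (The coefficient printed as $a$ in Condition~(1) is a typo for $b$, as the finiteness corollary that follows it makes clear.) You instead combine Conditions~(2) and~(3) via $|x|+|x-b\alpha'|\ge b\alpha'$; this costs two evaluations rather than one but is equally valid, and it has the virtue of making the uniformity over all pairs $(a,b)$ completely explicit. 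You are also more careful than the paper about signs: Corollary~\ref{corrr:sig} is stated only for $a,b>0$, and your reduction (mirror to force $b>0$, then dispose of $a<0$ using $\sigma'(1)=bn+|a|k\ge n+k$ for the sum of positive torus knots) is exactly the justification the paper leaves implicit. Your intermediate inequality $\alpha'/2\ge (n-k)/(2k+1)$ does hold, since $\lfloor\lfloor x\rfloor\rfloor\ge x-1$ in both the integer and non-integer cases, though it would be worth recording that property of the modified floor where you use it. No gaps.
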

\begin{proof} By Theorem~\ref{thm:b1} we have that 
\[
\overline{d}(  \calt_{2,2k+1},\calt_{2n+1})  \le  \min\{   n - \alpha k ,  (\alpha +1)(k+1) - n -1 \},
\]
where $\alpha = \lfloor \frac{2n+1}{2k+1} \rfloor$.  Since $\big| \alpha - \frac{2n+1}{2k+1} \big| \le 1$ and is  not multiplied by $n$ in this bound, we can replace $\alpha$ with $\frac{2n+1}{2k+1}$ in the bound without changing the limiting behavior once it is divided by $n$.  An elementary algebraic manipulation then provides the upper bound for the  limit of $\overline{d}/n$ to be  $\frac{1}{2k+1}$.

To get the lower bound, we use Corollary~\ref{corrr:sig}, which implies 
\[\overline{d}(   \calt_{2,2k+1},\calt_{2n+1})) \ge  \lfloor \lfloor \frac{1}{2}\frac{2n+1}{2k+1} +\frac{1}{2} \rfloor\rfloor.
\]
Again, the floor function differs from its argument by an amount that bounded by 1, so we have 
\[\overline{d}(   \calt_{2,2k+1},\calt_{2n+1})) \ge    \frac{1}{2}\frac{2n+1}{2k+1} +\frac{1}{2}  .
\]
Forming the quotient with $n$ and taking the limit as $n$ goes to infinity gives the desired lower bound.
\end{proof}


\section{Projectivizing abelian groups}\label{sec:project}

We would like to define a distance on the concordance group by something like 
\[ \min \{ d(K, J) \ \big| \ \calk \in \cals_1, \calj \in \cals_2 \},
\]
where $\cals_1$ and $\cals_2$ are maximal cyclic subgroups of $\calc$ containing $\calk$ and $\calj$, respectively.  Such maximal subgroups exist by Zorn's Lemma    however  they need not be unique.  For example, consider $\Z \oplus \Z_2$.  The subgroups $\left< (1,0) \right>$ and  $\left< (1,1) \right>$ are both maximal cyclic subgroups containing $(2,0)$. 

In this section we will discuss a general approach to the algebra associated to the relation on a group generated by the property of elements being in a common cyclic subgroup.  The construction is modeled on that of projective spaces associated to vector spaces. Although our interest is ultimately in the abelian group $\calc$, a $\Z$--module, it will be valuable to work with general modules over integral domains.

\subsection{The projective space of an $R$--module}

Let $R$ be an integral domain,   let $M$ be a left $R$--module, and let $M^\circ = M \setminus 0$.  Define a binary relation on $M^o$ by $x \sim' y$ if there exists an $m\in M$ such that $x = rm$ and $y= sm$ for some $r, s \in R$ and some $m\in M$.
Notice that this is reflexive and symmetric, but it need not be transitive.
\begin{example}
Let $R = \Z$ and $M= \Z \oplus \Z_2 \oplus \Z_2.$  Then $(1,1,0) \sim' (2,0,0)$ and   $(1,0,1) \sim' (2,0,0)$, but  $(1,1,0) \not\sim' (1,0,1).$   
\end{example}

\begin{definition}  Define the {\it projective relation} on $M^\circ $, $x \sim_M y $, to be the equivalence relation generated by $\sim'$.  That is, $a \sim_M b$ if and only if there is a finite chain
\[ x = x_0 \sim' x_1 \sim' \cdots \sim' x_n =  y.
\]
Except where needed, we will write ``$\sim$'' instead of ``$\sim_M$''.
\end{definition}

The following result might clarify the equivalence relation and highlights why we chose to define it in terms of having common divisors instead of having common multiples.

\begin{theorem} \label{thm:infinite}
If $x, y \in M$ are non-torsion elements and $[x] = [y] \in \P(M)$, then there exist elements $a, b \in R$ such that $ax = by \ne 0$.\end{theorem}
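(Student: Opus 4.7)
My plan is to prove the identity by a telescoping argument along a chain that realizes $x \sim_M y$. By the definition of the projective relation as the equivalence relation generated by $\sim'$, such a chain exists:
\[ x = x_0 \sim' x_1 \sim' \cdots \sim' x_n = y, \]
with every $x_i \in M^\circ$. Each single-step relation $x_i \sim' x_{i+1}$ furnishes an element $m_i \in M$ and scalars $r_i, s_i \in R$ with $x_i = r_i m_i$ and $x_{i+1} = s_i m_i$. From this I immediately extract $s_i x_i = r_i x_{i+1}$, using that $R$ is commutative (since it is an integral domain).

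The first substantive step I would carry out is to observe that none of the scalars $r_i, s_i$ can vanish: if, for instance, $s_i$ were zero, then $x_{i+1} = s_i m_i = 0$, contradicting $x_{i+1} \in M^\circ$. With this in hand, I would iterate the relations $s_i x_i = r_i x_{i+1}$, moving along the chain by multiplying each new relation through by the accumulated scalars, to telescope them into a single identity
\[ \Bigl( \prod_{i=0}^{n-1} s_i \Bigr)\, x \;=\; \Bigl( \prod_{i=0}^{n-1} r_i \Bigr)\, y. \]
Setting $a = \prod s_i$ and $b = \prod r_i$ gives the desired equation $ax = by$. Formally this is a straightforward induction on $n$, with the base case $n=1$ being the single-step identity above.

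The only place where the hypotheses really bite — and what I would call the substantive step rather than a true obstacle — is verifying that this common value is nonzero. Because $R$ is an integral domain and each $s_i \neq 0$, the product $a$ is nonzero; because $x$ is non-torsion and $a \neq 0$, we conclude $ax \neq 0$. I do not anticipate any genuinely hard step: the integral-domain hypothesis on $R$ and the non-torsion hypothesis on $x$ are exactly what is needed to close the argument. It is worth remarking that the intermediate elements $x_i$ in the chain may well be torsion, so the argument must (and does) avoid invoking anything about them; and the non-torsion hypothesis on $y$ is actually redundant, since from $ax = by \neq 0$ and $b \neq 0$ one can read off non-torsionness of $y$ after the fact.
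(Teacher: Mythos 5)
Your proof is correct: the telescoping of the single-step relations $s_i x_i = r_i x_{i+1}$, the observation that all $r_i, s_i$ are nonzero because every $x_i \in M^\circ$, and the use of the integral-domain and non-torsion hypotheses to get $ax = \bigl(\prod s_i\bigr)x \ne 0$ are exactly the intended argument (the paper states this theorem without proof, and this is the natural way to supply one). Your side remark that the non-torsion hypothesis on $y$ is redundant is also right, though to deduce it one should note that $cy=0$ with $c\ne 0$ would force $c a x = b(cy) = 0$ with $ca \ne 0$, contradicting that $x$ is non-torsion.
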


\begin{example}\label{ex:6} The classes $[2] = [3] \in \P(\Z_6)$, but the  two elements $2, 3 \in \Z_6$ have no nonzero multiple in common.
\end{example}

\begin{definition} Define the projective space $\P( M) = M^\circ /\sim$.  Set $\P^*(M) = * \sqcup \P(M)$, where $*$ denotes a disjoint point.
\end{definition}
We have the following     elementary result.   

\begin{theorem} \label{thm:inject} Let  $M$ be an $R$--module and $N$ be an $S$--module, let  $\phi \co R\to S$ be  a  ring homomorphism,  and let $F\co M \to N$ be a   module homomorphism over $\phi$.  Then  (1)    $\F$  induces a map  $F_*\co  \P^*(M)  \to \P^*(N)$, sending the equivalence class of $x$ to $*$ if $rx \in \ker (F)$ for some $r \ne 0$; (2)  If $F$ is surjective, then $F_*$ is surjective;  (3)  If $F$ is injective, then   there is also an induced map   $F_* \co \P(M) \to    \P(N)$; this map need not be injective.
\end{theorem}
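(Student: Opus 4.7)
For (1), I extend $F_*$ by $F_*(*) = *$ on the basepoint, and for $[x] \in \P(M)$ set $F_*([x]) = *$ if $rx \in \ker F$ for some $r \ne 0$ and $F_*([x]) = [F(x)]$ otherwise. Well-definedness has two components: the $*$-condition must depend only on the class $[x]$, and when it fails the class $[F(x)] \in \P(N)$ must also be independent of the chosen representative. Both reduce by transitivity to the one-step case, so suppose $x \sim' y$ with $x = r_0 a$ and $y = s_0 a$. Since $x, y \in M^\circ$ we have $r_0, s_0 \ne 0$, and the integral domain hypothesis ensures products of nonzero scalars remain nonzero. If $rx \in \ker F$ with $r \ne 0$, then
\[
F\bigl((rr_0)y\bigr) = \phi(rr_0 s_0)F(a) = \phi(s_0)F(rx) = 0,
\]
and $rr_0 \ne 0$, so the $*$-condition transfers to $y$. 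Otherwise $[x] \ne *$; taking $r = 1$ gives $F(x) \ne 0$, whence $F(a) \ne 0$, and then the identities $F(x) = \phi(r_0)F(a)$ and $F(y) = \phi(s_0)F(a)$ exhibit $F(x) \sim' F(y)$ in $N^\circ$.

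For (2), $F_*(*) = *$ handles the basepoint, and for $[n] \in \P(N)$ with $n \in N^\circ$, surjectivity of $F$ furnishes $m \in M^\circ$ with $F(m) = n$; unpacking the definition then shows $F_*([m]) = [n]$, so every class in $\P^*(N)$ is hit. For (3), injectivity of $F$ ensures $F(x) \ne 0$ for every $x \in M^\circ$, so $F_*([x]) = [F(x)]$ lands in $\P(N)$; the well-definedness argument of (1) carries over without any side hypothesis, since for $x \sim' y$ with common divisor $a \in M^\circ$ injectivity of $F$ places $F(a)$ in $N^\circ$.

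To show this induced map need not be injective, I plan to exhibit the following concrete example. Take $R = \Z$, $M = \Z_2 \oplus \Z_2$, $N = \Z_2 \oplus \Z_2 \oplus \Z_4$, and $F(a,b) = (a,b,a)$, which is injective. The three classes $\{(1,0)\}$, $\{(0,1)\}$, $\{(1,1)\}$ of $\P(M) = \F_2 \P^1$ are pairwise distinct singletons, yet in $\P(N)$ the chain
\[
(1,0,1) \ \sim' \ (0,0,2) \ \sim' \ (1,1,1),
\]
coming from $(0,0,2) = 2\cdot(1,0,1) = 2\cdot(1,1,1)$, forces $F_*([(1,0)]) = F_*([(1,1)])$. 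The obstruction is that the lifted chain in $M$ would pass through $2\cdot(1,0) = 0 = 2\cdot(1,1)$, outside $M^\circ$. I anticipate the main effort to be in cleanly organising the two components of well-definedness in (1); (2) and (3) then follow by unpacking the definitions, and the counterexample above completes the picture.
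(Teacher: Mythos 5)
Your arguments for part (1) and for the positive half of (3) are sound and in fact more detailed than the paper, which states the theorem without proof; the only piece of justification the paper supplies is the counterexample to injectivity, and that is precisely the piece of your proposal that fails. The map $F(a,b)=(a,b,a)$ from $\Z_2\oplus\Z_2$ to $\Z_2\oplus\Z_2\oplus\Z_4$ is not a homomorphism: $F\bigl((1,0)+(1,0)\bigr)=F(0,0)=0$ while $F(1,0)+F(1,0)=(0,0,2)\ne 0$, so there is no injective $F$ realizing your chain $(1,0,1)\sim'(0,0,2)\sim'(1,1,1)$. The defect is not cosmetic: the only homomorphic version, $(a,b)\mapsto(a,b,2a)$, induces an \emph{injective} map on projective spaces. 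Indeed, when every element of $N$ has order a power of $2$, the paper's result on $p$-groups identifies $\P(N)$ with the set of lines in the $\Z_2$--vector space $\{x\in N : 2x=0\}$; an injective homomorphism from $\Z_2\oplus\Z_2$ lands in that subspace and separates lines, so no example of your shape inside $2$--groups can exist. Failure of injectivity genuinely requires mixed torsion, which is the paper's own example: the inclusion $\Z_2\oplus\Z_2\hookrightarrow\Z_2\oplus\Z_2\oplus\Z_3$ is injective, yet $\P(\Z_2\oplus\Z_2)$ has three points while $\P(\Z_2\oplus\Z_2\oplus\Z_3)$ is a single point by Theorem~\ref{thm:tor1}.

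There is also a gap in your part (2). With the map defined as in (1) -- which is how you define it -- the claim that ``unpacking the definition shows $F_*([m])=[n]$'' is false in general: one can have $F(m)=n\ne 0$ while some nonzero multiple of $m$ lies in $\ker F$, in which case $F_*([m])=*$, not $[n]$. For the reduction $F\co\Z\to\Z_2$, every nonzero $m$ satisfies $2m\in\ker F$, so the single class of $\P(\Z)$ is sent to $*$ and the class of $1\in\Z_2$ is never hit, even though $F$ is surjective. So surjectivity of $F_*$ cannot be obtained by merely choosing a preimage; it requires an additional condition, for example that every nonzero $n\in N$ admits a preimage $m$ no nonzero multiple of which lies in $\ker F$ (this holds for the quotient $\calc\to\calc/\mathrm{Tor}(\calc)$, where $rm\in\mathrm{Tor}(\calc)$ with $r\ne 0$ forces $m$ to be torsion, which is the situation the paper actually uses). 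You should either add such a hypothesis and verify it in the intended applications, or explicitly flag that part (2) as literally stated, with the $*$-clause of part (1), does not follow from surjectivity of $F$ alone.
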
 

\begin{example} The projective space  $\P(\Z_2 \oplus \Z_2)$ has three elements corresponding to the three nontrivial elements in the group.  On the other hand, $\P(\Z_2 \oplus \Z_2 \oplus \Z_3)$ has one element; this follows from Theorem~\ref{thm:tor1}  below or an elementary calculation.   This example  shows that in Theorem~\ref{thm:inject} we cannot conclude that an injective map on $M$ induces an injective map on $\P(M)$.
\end{example}

\begin{example}
If $R = \F$ is a field,  then $\P (M)$ is the standard projective space.  For instance, if $M = \F^n$, then $\P(\F^n) $ is, in the usual notation, $\P \F^{n-1}$.  In Corollary~\ref{corollary:free} we discuss the case that $M = R^n$ for an integral domain $R$. 
\end{example}

\subsection{Torsion groups}

\begin{example}  For any $n>1$, $\P(\Z_n)$ has one point, the equivalence class of $1$.
\end{example}

\begin{theorem}\label{thm:tor1}  If $G$ is a torsion abelian group containing elements $a$ and $b$ of distinct prime orders $p$ and $q$, then $\P(G)$ has one element.
\end{theorem}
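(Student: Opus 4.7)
The plan is to show that every nonzero $x \in G$ is equivalent under $\sim$ to the distinguished element $a$, so that $\P(G)$ consists of a single point. The key preliminary observation I would use is that for nonzero $x,y \in G$, $x \sim' y$ if and only if $\langle x,y\rangle$ is a cyclic subgroup of $G$: one direction is immediate from the definition (with $m$ as generator), and the other holds because any two elements of a cyclic group are both integer multiples of a generator. This reframes $\sim'$ in purely group-theoretic terms, so the only thing to verify in each step is the cyclicity of a two-generator subgroup.

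With this in hand, I would first prove the bridge $a \sim' b$. Since $p$ and $q$ are distinct primes, $\gcd(p,q)=1$, so $\langle a\rangle \cap \langle b\rangle = 0$ and the Chinese Remainder Theorem gives $\langle a, b\rangle \cong \Z_p \oplus \Z_q \cong \Z_{pq}$, which is cyclic. Hence $a \sim' b$.

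Next, for a nonzero $x \in G$ of order $n$, I would split on whether $p \mid n$. If $p \nmid n$, then $\langle x, a\rangle \cong \Z_n \oplus \Z_p \cong \Z_{np}$ is cyclic, so $x \sim' a$ directly. If $p \mid n$, let $x_p$ denote the $p$-primary component of $x$; CRT applied inside $\Z_n$ shows that $x_p$ is an integer multiple of $x$, so $x_p$ is a nonzero element of $\langle x\rangle$, whence $\langle x, x_p\rangle = \langle x\rangle$ is cyclic and $x \sim' x_p$. Since $x_p$ has $p$-power order, $\gcd(\mathrm{ord}(x_p), q) = 1$, and the same CRT argument yields $\langle x_p, b\rangle$ cyclic, so $x_p \sim' b$. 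Chaining through $b \sim' a$ gives $x \sim a$.

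The main obstacle, such as it is, lies in the case $p \mid \mathrm{ord}(x)$: a naive one-step attempt to directly relate $x$ to $a$ fails because $\langle x, a\rangle$ need not be cyclic when both live in the $p$-primary part $G_p$ but span a subgroup isomorphic to $\Z_{p^e}\oplus\Z_p$. The essential use of the hypothesis is that the element $b$ of a second prime order provides the only available bridge out of $G_p$; all remaining steps reduce to routine applications of CRT for cyclic groups of coprime order.
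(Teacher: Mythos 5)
Your proof is correct, and it is essentially the paper's argument in a slightly more systematic dress: both proofs replace $x$ by a nonzero multiple whose order is coprime to $q$ (the paper passes to a prime-order multiple, you pass to the $p$-primary component) and then exploit the fact that two elements of coprime order in an abelian group generate a cyclic subgroup, the paper realizing this via the explicit generator $x'+b$ and you via the Chinese Remainder Theorem. The reformulation of $x \sim' y$ as cyclicity of $\langle x, y\rangle$ is a clean way to package the same mechanism.
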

\begin{proof} Given an arbitrary $x \ne 0 \in G$, by taking a multiple we see that  $x$ is equivalent to some element $x'$  of prime order $s$.  Assume $s \ne q$.  Then 
\[ 
x' \sim' qx' = q(x'+b) \sim' s(x' +b) = sb \sim' b.
\]
Thus, every element is equivalent to either $a$ or $b$.  But these are also equivalent: \[ 
a \sim' qa = q(a +b) \sim' p(a +b) = pb \sim' b.
\]
\end{proof}

\begin{theorem}\label{thm:tor2}  Suppose that $p$ is a prime and each elements of an abelian group $G$ has order $p^k$ for some $k$.  Let $H \subset G$ be the subgroup consisting of elements $x$ satisfying $px = 0$.

Then the map induced by  inclusion    $\P(H) \to \P(G)$ is a bijection.
\end{theorem}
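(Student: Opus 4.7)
The plan is to verify surjectivity and injectivity of the map $\iota_*\co \P(H)\to \P(G)$ induced by inclusion $\iota\co H\hookrightarrow G$, both using the same basic operation: given a nonzero $x\in G$ of order $p^{k}$, replace it by $p^{k-1}x\in H\setminus 0$. This element lies in $H$ because multiplication by $p$ kills it, and it is nonzero because $x$ has order exactly $p^{k}$. Moreover $p^{k-1}x\sim' x$ inside $G$, since both are multiples of $m=x$.

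For surjectivity, given a class $[x]\in \P(G)$ with $x\neq 0$, the observation above shows $[x]=[p^{k-1}x]$ and the representative $p^{k-1}x$ lies in $H$; hence every class in $\P(G)$ contains a representative coming from $H$.

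For injectivity, suppose $x,y\in H\setminus 0$ satisfy $[x]=[y]$ in $\P(G)$, so there is a chain
\[
x=x_{0}\sim' x_{1}\sim' \cdots \sim' x_{n}=y
\]
of nonzero elements in $G$. Let $p^{k_{i}}$ denote the order of $x_{i}$, and set $x'_{i}=p^{k_{i}-1}x_{i}\in H\setminus 0$; since $x,y\in H$ we have $k_{0}=k_{n}=1$ and thus $x'_{0}=x$, $x'_{n}=y$. The task is to upgrade the chain in $G$ to a chain in $H$ by showing $x'_{i}\sim'_{H}x'_{i+1}$ for each $i$. Fixing $i$, write $x_{i}=r_{i}m_{i}$ and $x_{i+1}=s_{i}m_{i}$ with $m_{i}\in G$ of order $p^{e_{i}}$. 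Factor $r_{i}=p^{a_{i}}r'_{i}$ and $s_{i}=p^{b_{i}}s'_{i}$ with $r'_{i},s'_{i}$ coprime to $p$; since $x_{i},x_{i+1}\neq 0$ we have $a_{i},b_{i}<e_{i}$, and a routine order computation gives $k_{i}=e_{i}-a_{i}$ and $k_{i+1}=e_{i}-b_{i}$. Substituting yields
\[
x'_{i}=p^{k_{i}-1}x_{i}=p^{e_{i}-1}r'_{i}m_{i},\qquad x'_{i+1}=p^{e_{i}-1}s'_{i}m_{i},
\]
so with $m'_{i}\defeq p^{e_{i}-1}m_{i}\in H\setminus 0$ we exhibit $x'_{i}=r'_{i}m'_{i}$ and $x'_{i+1}=s'_{i}m'_{i}$, proving $x'_{i}\sim'_{H}x'_{i+1}$. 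Concatenating gives $x\sim_{H}y$.

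The main obstacle is the injectivity step: the intermediate $x_{i}$ in a chain witnessing $[x]=[y]$ in $\P(G)$ need not lie in $H$, so one cannot naively push the chain into $H$. The key idea is the uniform operation $x_{i}\mapsto p^{k_{i}-1}x_{i}$, which lands in $H$ and is compatible with the common-divisor witnesses: the same $m_{i}$ used in $G$ produces the $H$-witness $p^{e_{i}-1}m_{i}$, because the $p$-valuations of the coefficients $r_{i},s_{i}$ match up exactly with the drop from $k_{i}$ (resp.~$k_{i+1}$) to $1$. Once this matching is verified, everything else is bookkeeping.
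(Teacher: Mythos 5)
Your proof is correct and follows essentially the same route as the paper: both hinge on the operation $x \mapsto p^{k-1}x$ sending a nonzero element of order $p^k$ to a nonzero element of $H$ in the same projective class, and on the fact that a common divisor $m_i$ in $G$ yields the common divisor $p^{e_i-1}m_i$ in $H$, so chains descend. The paper packages this as an explicit inverse map $F_*\co \P(G)\to\P(H)$ and cites the uniqueness of lines in the $\Z_p$--vector space $H$ where you instead verify the witness by a $p$-adic valuation computation, but the content is the same.
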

\begin{proof}  Notice that $H$ is a $\Z_p$--vector space and thus each non-zero element   lies on a unique line, or stated  equivalently, in a unique cyclic subgroup. 

Given an element $x  \ne 0 \in G$, choose the least $n > 0$ such that $nx \in H$ and note $nx \ne 0.$ Denote  $nx $ by $F(x)$.   We claim that $F$ induces a bijection  $F_*\co \P(G) \to \P(H)$.  

First we show it is well-defined.  If $x \sim' y$ then $x$ and $y$ are in a common cyclic subgroup of $G$.  The intersection of that subgroup with $H$ is a cyclic subgroup, and so $F(x)$ and $F(y)$ lie on a common line in  $H$, which must be the unique line through $F(x)$.  Continuing in this way, if there is a sequence $x = x_0 \sim' x_1\sim'  \cdots \sim' x_n = y$, we see that $F(x_i)$ lies on the line through $F(x)$ for all $i$ and in particular $F(x)$ and $F(y)$ lie in a common cyclic subgroup.  Thus, $F^*$ is well-defined.

It is clear that $F_*$ is surjective.  

For injectivity, first note that it is  evident  that if $F(x) \sim_H F(y)$ then $F(x) \sim_G F(y)$.  It is also clear that $F(x) \sim_G x$ and $F(y) \sim_G y$.  So, if $F(x) \sim_H F(y)$ we have the chain
\[ x \sim'_G F(x) \sim'_G F(y) \sim' y.
\]
\end{proof}
\begin{example}
For   direct sums, infinite as well as finite, and for any prime $p$,  the inclusion $\oplus_i \Z_{p} \to \oplus_i \Z_{p^{a_i}}$ induces a bijection $\P(\oplus_i \Z_{p}) \to \P(\oplus_i \Z_{p^{a_i}})$.  The domain is     a $\Z_p$--projective space.  There is one point in  $\P(\oplus_i \Z_p)$ for each order $p$ cyclic subgroup.  

In the case of $p=2$, cyclic subgroups correspond to nontrivial elements of  $\P(\oplus_i \Z_2)$ and thus there is a bijection $( \oplus_i \Z_2 \setminus 0 ) \to \P(\oplus_i \Z_{2^a_i})$. 

In the case of a finite sum, $  \oplus_{i=0}^n \Z_{p^{a_i}}$ we have that the number of elements in the projective space is $(p^n -1)/(p-1)$.
\end{example}

\subsection{The torsion free case}

Let $M$ be a torsion free module over $R$ and let $\Q(R)$ denote the field of fractions.   Let $M_\Q = M \otimes \Q(R)$ be the associated $\Q(R)$ vector space.   

\begin{theorem}\label{thm:free2} If $M$ is torsion free, then there is  a natural bijection $\P(M) \to \P (M_\Q)$.
\end{theorem}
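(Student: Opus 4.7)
The plan is to use the canonical $R$--module map $i\co M \to M_\Q$, which is injective because $M$ is torsion-free over the integral domain $R$. Theorem~\ref{thm:inject}(3), applied to the inclusion $\phi\co R \to \Q(R)$, then supplies an induced map $i_*\co \P(M) \to \P(M_\Q)$, and I would show this map is a bijection.

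The cornerstone is a preliminary lemma I would establish first: for a torsion-free $R$--module $M$ and nonzero $x, y \in M$,
\[
  x \sim_M y \iff \text{there exist nonzero } a, b \in R \text{ with } ax = by.
\]
The forward direction is a direct unpacking of a chain $x = x_0 \sim' x_1 \sim' \cdots \sim' x_n = y$, using torsion-freeness to guarantee that the accumulated scalars remain nonzero. For the converse, given $ax = by \ne 0$, one uses the two-step chain $x \sim' bx = ay \sim' y$, where the first $\sim'$ is witnessed by the common element $x$ with scalar pair $(1, b)$, and the second by $y$ with $(a, 1)$.

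Surjectivity of $i_*$ is then straightforward: every nonzero $z \in M_\Q$ can be written as $z = (1/s)\, i(m)$ for some $s \in R \setminus 0$ and $m \in M^\circ$, and the witness $i(m)$ with scalar pair $(1/s, 1)$ shows $z \sim'_{M_\Q} i(m)$, so $[z] = i_*[m]$. For injectivity, observe that in the $\Q(R)$--vector space $M_\Q$ the relation $\sim'$ is already transitive, since two nonzero collinear vectors span the same line, so $\sim_{M_\Q}$ reduces to collinearity. Hence if $i_*[x] = i_*[y]$, there exist nonzero $\alpha, \beta \in \Q(R)$ with $\alpha\, i(x) = \beta\, i(y)$ in $M_\Q$; clearing denominators and invoking injectivity of $i$ yields nonzero $a, b \in R$ with $ax = by$ in $M$, and the lemma concludes $[x] = [y] \in \P(M)$.

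The main obstacle is the converse direction of the lemma. The relation $\sim'$ is defined in terms of a common \emph{divisor}, whereas the equation $ax = by$ produces only a common \emph{multiple}; these are genuinely inequivalent conditions in general, as illustrated by Example~\ref{ex:6} and the discussion surrounding Theorem~\ref{thm:infinite}. The torsion-free hypothesis is precisely what bridges the gap, by allowing $x$ and $y$ themselves to serve as the intermediate common divisors in a length-two chain; once this is in hand, the bijectivity of $i_*$ is almost formal.
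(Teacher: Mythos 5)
Your proposal is correct and follows essentially the same route as the paper: the induced map comes from Theorem~\ref{thm:inject}, surjectivity is a one-step clearing of a denominator, and injectivity is done by clearing denominators and exhibiting a short explicit chain of $\sim'$ relations in $M$ (the paper chains through the common divisor $tm$, while you chain through the common multiple with $x$ and $y$ themselves as the witnessing divisors --- a cosmetic difference). One transcription slip in your lemma's converse: from $ax = by \ne 0$ the chain should read $x \sim' ax = by \sim' y$, with scalar pairs $(1,a)$ and $(b,1)$, rather than $x \sim' bx = ay \sim' y$.
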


\begin{proof}  We first recall the elementary fact  that  $M$ is torsion free implies that $M \to M_\Q$ is injective.  Another elementary observation is that for every  $x  \ne 0 \in M_\Q$, there is an $ \alpha \ne 0  \in R$ such that $\alpha x \in M$.   By Theorem~\ref{thm:inject} there is a natural   map $\psi \co \P(M) \to \P (M_\Q)$.

It is clear that  $\psi$ is surjective:  $m \otimes \frac{a}{b} \sim' b(  m \otimes \frac{a}{b} ) = m \otimes a =  am \otimes 1$. 

To show that $\psi$ is injective, suppose that $a, b \in M$ and $a \sim_{M_\Q} b$.  Then there are an  $r, s \in \Q(R)$ and an $m \in M_\Q$ such that $a= rm$ and $b=sm$.  Choose an element in $t \in R$ such that $tr \in R$,  $ts\in R$ and  $tm \in M$.    Then we have the following relations in $M$, where each element within parentheses is in $R$ or $M$. 
\[a \sim'_M (t^2)a =(t^2)(rm) = (tr)(tm)  \sim'_M (tm) \sim'_M (ts)(tm) =  (t^2)(sm) = (t^2)b \sim' b.
\] \end{proof}

\begin{corollary}\label{corollary:free}  The inclusion $\Z \to \Q$ induces a bijection $\P(\Z^\infty) \to  \P(\Q^\infty)= \Q\P^\infty$.
\end{corollary}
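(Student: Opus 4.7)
The plan is to derive this directly as a specialization of Theorem~\ref{thm:free2}. Take $R = \Z$, whose field of fractions is $\Q(\Z) = \Q$, and take $M = \Z^\infty$, the countable direct sum. Since $M$ is a submodule of a product of copies of $\Z$, it is torsion-free as a $\Z$-module, so Theorem~\ref{thm:free2} supplies a natural bijection $\P(\Z^\infty) \to \P(M_\Q)$, where $M_\Q = \Z^\infty \otimes_\Z \Q$.

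Next I would identify $M_\Q$ with $\Q^\infty$. Because tensor product commutes with direct sums, $\Z^\infty \otimes_\Z \Q = \bigl(\bigoplus_i \Z\bigr) \otimes_\Z \Q \cong \bigoplus_i (\Z \otimes_\Z \Q) = \bigoplus_i \Q = \Q^\infty$, and under this isomorphism the canonical map $M \to M_\Q$ corresponds to the inclusion induced by $\Z \hookrightarrow \Q$ on each factor. Composing with the bijection from Theorem~\ref{thm:free2} gives a bijection $\P(\Z^\infty) \to \P(\Q^\infty)$ that is visibly induced by $\Z \hookrightarrow \Q$.

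Finally, $\P(\Q^\infty)$ is the standard infinite rational projective space $\Q\P^\infty$, as noted in the earlier example discussing $\P(\F^n)$ for a field $\F$: the relation $\sim'$ over a field is already transitive (two nonzero vectors are equivalent iff they are scalar multiples of each other), so $\P(\Q^\infty)$ is exactly the usual projective space associated to the $\Q$-vector space $\Q^\infty$.

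There is really no obstacle here beyond verifying that the bijection of Theorem~\ref{thm:free2} is the one induced by $\Z \hookrightarrow \Q$; that is immediate from the construction in the proof of Theorem~\ref{thm:free2}, where the map $\psi\co \P(M) \to \P(M_\Q)$ is defined via the inclusion $M \to M_\Q$ guaranteed by torsion-freeness. Hence the whole corollary reduces to invoking Theorem~\ref{thm:free2} and rewriting the target in standard notation.
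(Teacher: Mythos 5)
Your argument is exactly the paper's intended one: the corollary is stated as an immediate consequence of Theorem~\ref{thm:free2} with $R=\Z$, $M=\Z^\infty$, and the identification $\Z^\infty\otimes_\Z\Q\cong\Q^\infty$, after which $\P(\Q^\infty)$ is the usual projective space since $\sim'$ is already an equivalence relation over a field. The proposal is correct and requires no changes.
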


\subsection{Modules with free parts and torsion}
We continue to assume that $R$ is an integral domain.

\begin{theorem}\label{thm:tor2}  For arbitrary nonzero elements $a$ and $b$ in an   $R$--module $M$, if $a \sim b$ and $a$ is $R$--torsion, then $b$ is also $R$--torsion.
\end{theorem}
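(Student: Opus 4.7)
The plan is a straightforward induction on the length of the chain witnessing $a\sim b$, so the whole proof reduces to a one-step lemma: if $a\sim' b$ with $a,b$ nonzero and $a$ is $R$-torsion, then $b$ is $R$-torsion. Granting that lemma, an obvious induction on $n$ handles an arbitrary chain $a=x_0\sim' x_1\sim'\cdots\sim' x_n=b$ (note that every $x_i$ is nonzero by definition of the projective relation on $M^\circ$), and then $b=x_n$ inherits torsion from $a=x_0$.

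To prove the one-step lemma, I would unpack $a\sim' b$: there exist $m\in M$ and $r,s\in R$ with $a=rm$ and $b=sm$. Since $a$ is torsion, choose $\alpha\in R$ with $\alpha\ne 0$ and $\alpha a=0$, so $\alpha r m=0$. Multiplying by $s$ gives
\[
(\alpha r)\,b \;=\; \alpha r s m \;=\; s(\alpha r m) \;=\; 0.
\]
It remains to check that the annihilating scalar $\alpha r$ is nonzero. This is where the integral-domain hypothesis on $R$ enters: since $a=rm\ne 0$ we must have $r\ne 0$, and $\alpha\ne 0$ by choice, so $\alpha r\ne 0$ in $R$. Hence $b$ is killed by a nonzero element of $R$, i.e.\ $b$ is torsion.

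There is essentially no obstacle; the only subtle point is the observation that both $r$ and $\alpha$ must be nonzero for their product to annihilate $b$ nontrivially, and this is where one uses that $R$ is a domain and that we work in $M^\circ$ rather than all of $M$. The statement would fail without the integral-domain assumption: for example, over $R=\Z_6$ with $M=R$, one has $2=1\cdot 2$ and $3=1\cdot 3$ coming from the common divisor $1$, and $3\cdot 2=0$ is torsion while $3$ is a unit on $1$—so some care about zero-divisors really is needed, and it is exactly the domain hypothesis that supplies it here.
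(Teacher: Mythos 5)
Your argument is correct and is essentially the same as the paper's: reduce to the one-step relation $a\sim' b$, write $a=rm$, $b=sm$, annihilate with $\alpha\ne 0$, and use that $\alpha r\ne 0$ because $R$ is an integral domain and $r\ne 0$ (as $a=rm\ne 0$), then propagate along the chain. The only addition is your remark on the necessity of the domain hypothesis, which is a reasonable aside but not needed for the proof.
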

\begin{proof}
If $a \sim' b$ then there are   an $m$, $r$, and $s$ so that  $a = rm$ and $b = sm$.  There is an $\alpha \in R$ such that $\alpha \ne 0$ and  $\alpha a = 0$.  Thus $\alpha r m = 0$.  We then have that $\alpha r b = \alpha r sm =0$.  Since $\alpha \ne 0 $,  $r \ne 0$,  and $R$ is an integral domain, it follows that  $\alpha r \ne 0$.  Thus $b$ is also torsion.  

Finally, we see that  in any sequence \[a = x_0 \sim' x_1 \sim' \cdots \sim' x_n =  b, 
\]
each successive $x_i$ is torsion.
\end{proof} 
Let $\text{Tor}(M)$ denote the $R$--torsion submodule in the $R$--module $M$.  

\begin{theorem}\label{thm:tor3}
If $x \in M$ is not $R$--torsion and  $y \in M$ is   $R$--torsion, then $[x] = [x+y] \in \P (M)$.
\end{theorem}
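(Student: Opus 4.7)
The plan is to exhibit the equivalence $x \sim x+y$ directly through a short chain of two $\sim'$ relations, using the auxiliary element $\alpha x$, where $\alpha \in R \setminus \{0\}$ is chosen to annihilate the torsion element $y$.

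First I would dispose of the trivial issue that $x+y$ must lie in $M^\circ$: if $x+y=0$, then $y=-x$, which would force $x$ to be torsion, contradicting the hypothesis. Next, choose $\alpha \in R$ with $\alpha \neq 0$ and $\alpha y = 0$. Since $x$ is not $R$-torsion and $R$ is an integral domain, $\alpha x \neq 0$, so $\alpha x \in M^\circ$ and is available as an intermediate element in a chain. The whole proof now rests on the identity
\[
\alpha(x+y) \;=\; \alpha x + \alpha y \;=\; \alpha x.
\]

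From this identity, two $\sim'$ relations fall out. The pair $(x,\alpha x)$ admits the common divisor $m = x$, by writing $x = 1\cdot x$ and $\alpha x = \alpha \cdot x$, so $x \sim' \alpha x$. The pair $(\alpha x, x+y)$ admits the common divisor $m = x+y$, by writing $x+y = 1\cdot(x+y)$ and $\alpha x = \alpha\cdot(x+y)$, so $\alpha x \sim' x+y$. Concatenating, $x \sim \alpha x \sim x+y$, hence $[x] = [x+y] \in \P(M)$.

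The only thing requiring attention is the directionality of $\sim'$: the definition demands a common divisor, not a common multiple, so the identity $\alpha x = \alpha(x+y)$ is used twice, once with $x$ playing the role of the common divisor and once with $x+y$. A single $\sim'$ step generally will not suffice when $y \neq 0$, because a witness $m$ dividing both $x$ and $x+y$ would force $y$ to be a multiple of $m$ matching the torsion/non-torsion constraints in an incompatible way; the two-step chain through $\alpha x$ sidesteps this. No deeper obstruction arises, and the argument uses only that $R$ is an integral domain, matching the hypotheses of the ambient discussion.
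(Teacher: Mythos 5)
Your proof is correct and is essentially identical to the paper's: both choose $\alpha\neq 0$ annihilating $y$ and use the two-step chain $x \sim' \alpha x = \alpha(x+y) \sim' x+y$. The extra checks you include (that $x+y\neq 0$ and $\alpha x\neq 0$ via the integral-domain hypothesis) are implicit in the paper's one-line argument.
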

\begin{proof}
Suppose that $r \ne 0$ and $r y = 0$.  Then $0 \ne rx = r(x +y)$ and
\[ x \sim'  rx  =  r(x +y)  \sim'   x+y .
\]  
\end{proof}

\begin{theorem} For any $R$--module $M$, $\P (M) = \P(\mathrm{Tor}(M)) \bigsqcup  \P(M /\mathrm{Tor}(M))$, where $\bigsqcup$ denotes disjoint union.

\end{theorem}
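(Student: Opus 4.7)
The plan is to split $\P(M)$ into two complementary subsets—classes containing a torsion element and classes containing a non-torsion element—and to identify these respectively with $\P(\mathrm{Tor}(M))$ and $\P(M/\mathrm{Tor}(M))$. Theorem~\ref{thm:tor2} already establishes that $\sim$ preserves the property of being torsion, so each equivalence class consists entirely of torsion elements or entirely of non-torsion elements; the two subsets of $\P(M)$ are therefore disjoint and together exhaust $\P(M)$.

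For the torsion half, I would use the inclusion $\mathrm{Tor}(M) \hookrightarrow M$ and the induced map $\P(\mathrm{Tor}(M)) \to \P(M)$ from Theorem~\ref{thm:inject}. Its image is exactly the subset of torsion classes in $\P(M)$, so surjectivity onto this subset is automatic. For injectivity, given $a, b \in \mathrm{Tor}(M) \setminus 0$ with a chain $a = x_0 \sim' x_1 \sim' \cdots \sim' x_n = b$ in $M$, I would argue the chain can be realized inside $\mathrm{Tor}(M)$. Each $x_i$ is torsion by Theorem~\ref{thm:tor2}, and at step $i$ the witness $m_i \in M$ satisfies $x_i = r_i m_i$ and $x_{i+1} = s_i m_i$ with $r_i \ne 0$ (since $x_i \ne 0$). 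Choosing $\alpha_i \ne 0$ with $\alpha_i x_i = 0$ gives $\alpha_i r_i m_i = 0$, and $\alpha_i r_i \ne 0$ because $R$ is an integral domain, forcing $m_i \in \mathrm{Tor}(M)$. Thus the entire chain lies in $\mathrm{Tor}(M)$, and $a \sim_{\mathrm{Tor}(M)} b$.

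For the torsion-free half, I would use the quotient map $\pi \co M \to M/\mathrm{Tor}(M)$. Theorem~\ref{thm:inject} gives $\pi_*\co \P^*(M) \to \P^*(M/\mathrm{Tor}(M))$, and by construction the torsion classes go to $*$ while the non-torsion classes land in $\P(M/\mathrm{Tor}(M))$. Surjectivity is clear, since every nonzero element of $M/\mathrm{Tor}(M)$ lifts to a non-torsion element of $M$. The main obstacle, and the step I expect to require the most care, is injectivity: given non-torsion $x, y \in M$ with $\overline{x} \sim_{M/\mathrm{Tor}(M)} \overline{y}$, I must lift a chain $\overline{x} = \overline{x}_0 \sim' \cdots \sim' \overline{x}_n = \overline{y}$ back to a chain in $M$. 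At each step, $\overline{x}_i = r\overline{m}$ and $\overline{x}_{i+1} = s\overline{m}$ for some $\overline{m} \in M/\mathrm{Tor}(M)$; picking any lift $m \in M$, we obtain $x_i - rm, x_{i+1} - sm \in \mathrm{Tor}(M)$, while $m$, $rm$, and $sm$ are all non-torsion because $\overline{m}, \overline{x}_i, \overline{x}_{i+1}$ are nonzero. Applying Theorem~\ref{thm:tor3} at the two endpoints yields $x_i \sim rm \sim' m \sim' sm \sim x_{i+1}$ in $M$, and concatenating these local chains gives $x \sim_M y$. This proves the non-torsion classes of $\P(M)$ are in bijection with $\P(M/\mathrm{Tor}(M))$, completing the desired disjoint union decomposition.
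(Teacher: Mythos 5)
Your proposal is correct and follows essentially the same route as the paper: split $\P(M)$ into torsion and non-torsion classes via the preservation-of-torsion lemma, identify the torsion classes with $\P(\mathrm{Tor}(M))$ by showing every chain between torsion elements (witnesses included) stays in $\mathrm{Tor}(M)$, and identify the non-torsion classes with $\P(M/\mathrm{Tor}(M))$ by lifting chains and absorbing the torsion discrepancies. Your use of Theorem~\ref{thm:tor3} at the endpoints is just a packaged form of the paper's ``multiply by $\alpha$ to kill the torsion'' chain, so the arguments coincide.
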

\begin{proof}

Let $\T(M) $ denote the set of classes in $\P(M)$ that are represented by elements in $ \mathrm{Tor}(M)$ and let $\F(M)$ consist classes in $\P(M)$ that are represented by  non-torsion elements of $M$.  If follows from  Theorem~\ref{thm:tor2} that $\P(M) = \T(M) \bigsqcup \F(M)$.  Thus we want to show that  $\P(\mathrm{Tor}(M)) = \T(M)$ and $\P(M / \mathrm{Tor}(M)) = \F(M)$.

\noindent{\bf Step 1.} Consider $a, b  \in \mathrm{Tor}(M)$. We first want to show that if $a \sim_M b$ then $a \sim_{\mathrm{Tor}(M)} b$.     Suppose that 
\[ a = x_0 \sim'_M x_1 \sim'_M \cdots \sim'_M x_n =  b 
\] 
is a chain.  We first note that  each $x_i \in {\mathrm{Tor}(M)} $.   If $a = rm $ and $x_1 = sm$, then since $a$ is torsion, $m$ is also torsion, and thus $x_1$ is torsion.  Proceed by induction.

We now need to show that if $a, b \in {\mathrm{Tor}(M)} $ and $a \sim'_M b$,  then $a \sim'_{\mathrm{Tor}(M)} b$.  Again, if $a = rm $ and $b = sm$, then since $a$ is torsion, $m$ is also torsion, and thus $a$ and $b$ are multiples of a common element in ${\mathrm{Tor}(M)}$. 

\noindent{\bf Step 2.}    We now observe that the previous step implies that   $\P( \mathrm{Tor}(M)) = \T(M) $.  The inclusion $\mathrm{Tor}(M)   \to M$ induces a map 
$\P( \mathrm{Tor}(M)) \to \T(M)$.  It is clearly surjective and the previous step show that it is injective.

\noindent{\bf Step 3.}    We now want to understand $\F(M)$.  Define $\phi \co \F(M) \to   \P(M /\mathrm{Tor}(M))$ by $[a]\to [\overline{a}]$, where $\overline{a}$ is the image of $a$ in $M / \mathrm{Tor}(M)$.  It is clear that   $\phi$ is well-defined: if $[a] = [a']$, then  $\overline{a} \ne 0 \ne  \overline{a}'$ and  $ [\overline{a}] = [\overline{a}']$. It is also clear that $\phi$ is surjective.  

We now prove injectivity.  If $\overline{a} \sim' \overline{b}$ then there is an element $m \in M$, elements $r, s \in R$, and torsion elements $t_1, t_2 \in \mathrm{Tor}(M)$ such that $a +t_1 = rm$ and $b +t_2 = sm$.  Suppose that $\alpha \in R$ satisifies $\alpha t_1 = 0 = \alpha t_2$.  Then $\alpha a = \alpha r m$ and $\alpha b = \alpha s m$.  We then have the chain
\[
a \sim' \alpha a \sim'm \sim'\alpha b \sim' b.
\] \end{proof}

\subsection{The projectivization of the concordance group: $\P(\calc)$}  We have the decomposition $\P(\calc) = \P({\text{Tor}}(\calc))  \sqcup \P(\calc/{\text{Tor}}(\calc))$.   

By Theorem~\ref{thm:free2} we have that  $\P(\calc/{\text{Tor}}(\calc))$ is in bijective correspondence with $\P(V)$ for some $\Q$--vector space.  Also, $\calc$ contains an infinite linearly independent set, so if fact,  $\P(\calc/{\text{Tor}}(\calc))$ is in bijective correspondence with $\P(\Q^\infty)$.

Since $\calc$ contains 2--torsion, if it also contains torsion of odd order, then by Theorem~\ref{thm:tor2}, $\P(\calc/{\text{Tor}}(\calc))$ has one point.  On the other hand, if all elements are of order $2^k$ for some $k$, then by Theorem~\ref{thm:tor2} we have
$\P(\calc/{\text{Tor}}(\calc))$ is in bijective correspondence with $\P(\Z_2^\infty)$.  We summarize these observations with the following theorem.

\begin{theorem} Either $ \P(\calc) = \P(Z_2^\infty)  \sqcup \P(\Q^\infty)$ or $ \P(\calc) = {*}  \sqcup \P(\Q^\infty)$, where $*$ is a single point.  The first case holds if $\calc$ contains no odd order torsion.  The second case holds if there is odd torsion.
\end{theorem}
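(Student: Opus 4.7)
The plan is to apply the decomposition $\P(\calc) = \P(\mathrm{Tor}(\calc)) \sqcup \P(\calc/\mathrm{Tor}(\calc))$ proved earlier in this section and to identify each factor separately.

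For the torsion-free factor, observe that $\calc/\mathrm{Tor}(\calc)$ is a countable torsion-free abelian group, and it has infinite rank because $\calc$ contains an infinite-rank free summand $\Z^\infty$. Theorem~\ref{thm:free2} together with Corollary~\ref{corollary:free} then produces $\P(\calc/\mathrm{Tor}(\calc)) \cong \P(\Q^\infty) = \Q\P^\infty$, the factor appearing in both cases of the statement.

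For the torsion factor I would split on whether $\calc$ has odd-order torsion. Since $\calc$ contains a $\Z_2^\infty$ summand, it always has $2$-torsion. If odd torsion also exists, then $\mathrm{Tor}(\calc)$ contains nonzero elements of two distinct prime orders, so Theorem~\ref{thm:tor1} forces $\P(\mathrm{Tor}(\calc))$ to collapse to the single point $*$, giving the second case. If no odd torsion exists, then every torsion element has $2$-power order (for if $n = 2^a m$ annihilates $x$ with $m > 1$ odd, then $2^a x$ would be a nonzero odd-order element, contradicting the hypothesis). The $p$-power-torsion version of Theorem~\ref{thm:tor2}, applied with $p = 2$, then reduces $\P(\mathrm{Tor}(\calc))$ to $\P(H)$, where $H = \{x \in \mathrm{Tor}(\calc) : 2x = 0\}$. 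Because $H$ is a countable $\Z_2$-vector space that contains a copy of $\Z_2^\infty$ coming from the $\Z_2^\infty$ summand of $\calc$, one has $H \cong \Z_2^\infty$ as $\Z_2$-vector spaces, and therefore $\P(H) = \P(\Z_2^\infty)$.

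The argument is essentially an assembly of Theorems~\ref{thm:free2}, \ref{thm:tor1}, the $p$-power version of Theorem~\ref{thm:tor2}, and the decomposition theorem, so no step presents a genuine obstacle. The only mildly delicate verification is that in the no-odd-torsion case the subgroup $H$ has countably infinite $\Z_2$-dimension, which follows at once from countability of $\calc$ together with the presence of its $\Z_2^\infty$ summand.
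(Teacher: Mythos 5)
Your proposal is correct and follows essentially the same route as the paper: the disjoint-union decomposition into torsion and torsion-free parts, Theorem~\ref{thm:free2} plus infinite rank for the $\Q\P^\infty$ factor, Theorem~\ref{thm:tor1} for collapse to a point when odd torsion is present, and the $p$-power torsion reduction (with $p=2$) otherwise. Your added verification that the subgroup $H$ is a countably infinite-dimensional $\Z_2$-vector space is a detail the paper leaves implicit, but it does not change the argument.
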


\section{Metrics on $\P^*(M)$.}\label{sec:metrics}

Suppose that $d$ is an integer-valued metric on the module $M$. We   show that it induces a metric $\Delta$   on $\P^*(M)$.  

\subsection{Definition of  $\Delta$.} Recall that for an element $x \in M$ we denote its equivalence class  by $[x]\in \P^*(M)$.  Also, $0 \in M$ is the unique representative of the class we have denoted $*$. 

\begin{definition}$ $
\begin{enumerate}

\item   For $[x] \in \P^*(M)$ and $ [y] \in \P^*(M)  $,   \[ \delta  ([x],[ y]) = \inf \{ d(x',y') \ \big| \  x' \in [x]\  ,\   y' \in [y]\}.\]

\item  For $[x] \in \P^*(M)$ and $ [y] \in \P^*(M)  $, 
\[
\Delta([ x],[y ]) = \min\{\delta( [x_0],[ x_1 ])+ \delta( [x_1],[ x_2] ) + \cdots + \delta( [x_{n-1}], [x_n ])\},
\]  where the minimum is taken over all sequences of classes for which $[x_0] =[ x]$ and $[x_n] =[ y]$.

\end{enumerate}
\end{definition}

Elementary examples demonstrate the need of considering chains to achieve transitivity.  The proof of the following result is immediate, given that $d$ is integer-valued.

\begin{theorem}  The function $\Delta\co \P^*(M) \times \P^*(M) \to \Z$ is   a metric.
\end{theorem}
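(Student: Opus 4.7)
The plan is to check the four standard axioms of a metric on $\P^*(M)$; three of them are essentially built into the definition of $\Delta$, and the only substantive step will be non-degeneracy, which is where the integer-valued hypothesis on $d$ becomes critical.

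First I would record the basic properties of $\delta$. Because $d$ takes values in $\Z_{\ge 0}$, the set $\{ d(x',y') \ \big| \ x' \in [x],\ y' \in [y]\}$ is a non-empty subset of $\Z_{\ge 0}$ and hence has a minimum by well-ordering. So $\delta \co \P^*(M) \times \P^*(M) \to \Z_{\ge 0}$ is integer-valued and the defining infimum is actually attained. Symmetry of $d$ gives symmetry of $\delta$, and $\delta([x],[x]) = 0$ since any representative $x'$ yields $d(x',x') = 0$. The same well-ordering argument makes $\Delta$ a genuine minimum over chains, taking non-negative integer values.

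Next I would verify symmetry and the triangle inequality for $\Delta$. Symmetry follows by reversing chains: if $[x] = [x_0], \ldots, [x_n] = [y]$ realizes $\Delta([x],[y])$, then $[y] = [x_n], \ldots, [x_0] = [x]$ is a chain of the same total length, by symmetry of $\delta$. For the triangle inequality, I would concatenate a minimizing chain from $[x]$ to $[y]$ with a minimizing chain from $[y]$ to $[z]$; this produces a chain from $[x]$ to $[z]$ whose total length equals $\Delta([x],[y]) + \Delta([y],[z])$, and the minimum over all chains of $[x]$ to $[z]$ can only be smaller. That $\Delta$ vanishes on the diagonal is immediate from the one-step chain $[x_0] = [x_1] = [x]$.

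The main obstacle, and the only place the integer hypothesis is used, is non-degeneracy: if $\Delta([x],[y]) = 0$ then $[x] = [y]$. In that case, some chain $[x] = [x_0], \ldots, [x_n] = [y]$ satisfies $\sum_{i=0}^{n-1} \delta([x_i],[x_{i+1}]) = 0$, and since each summand is a non-negative integer, each $\delta([x_i],[x_{i+1}])$ vanishes. By the first paragraph, this minimum is attained, so there exist $a_i \in [x_i]$ and $b_i \in [x_{i+1}]$ with $d(a_i,b_i) = 0$; since $d$ is a metric on $M$, $a_i = b_i$, whence $[x_i] = [x_{i+1}]$. Chaining these equalities yields $[x] = [y]$. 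Without integer-valuedness, the infimum defining $\delta$ need not be attained and one would in general obtain only a pseudo-metric.
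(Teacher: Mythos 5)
Your proof is correct and follows exactly the route the paper intends: the paper offers no written argument, remarking only that the result is ``immediate, given that $d$ is integer-valued,'' and your verification supplies precisely that, with symmetry and the triangle inequality coming from reversing and concatenating chains. The one substantive point --- that integer-valuedness forces the infima defining $\delta$ and $\Delta$ to be attained, so that $\Delta([x],[y])=0$ forces equal representatives and hence $[x]=[y]$ rather than merely a pseudo-metric --- is exactly the role of the hypothesis the paper is alluding to, and you have identified and used it correctly.
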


\subsection{Mappings of metric spaces $(\P^*(M), \Delta)$.}   

\begin{definition}Suppose that $F\co (X,d) \to (Y,d')$ is a function between metric spaces.    Then $F$ is called a {\it weak contraction} if $d'(F(x_0), F(x_1))  \le d(x_0,x_1)$ for all $x_0$ and $x_1$ in $X$.
\end{definition} 

We have the following elementary result.

\begin{theorem}  If $F \co (M,d) \to (N,d')$ is a weak contraction of modules with integer valued metrics, then $F_*\co (\P^*(M),\Delta)  \to (\P^*(N), \Delta')$ is a weak contraction with respect to the induced metrics.
\end{theorem}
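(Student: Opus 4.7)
The plan is to establish the pointwise inequality $\delta'(F_*([x]), F_*([y])) \le \delta([x],[y])$ for all $[x],[y] \in \P^*(M)$ and then lift it to $\Delta$ via the chain definition. Granting the inequality, if $[x] = [z_0], [z_1], \ldots, [z_n] = [y]$ is a chain realizing $\Delta([x],[y]) = \sum_i \delta([z_{i-1}], [z_i])$, then $F_*([z_0]), \ldots, F_*([z_n])$ is a chain in $\P^*(N)$ whose total length is bounded termwise by $\sum_i \delta([z_{i-1}], [z_i]) = \Delta([x],[y])$, giving $\Delta'(F_*([x]), F_*([y])) \le \Delta([x],[y])$.

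To prove the pointwise inequality, since $d$ is integer-valued we may pick representatives $x' \in [x]$ and $y' \in [y]$ (with the convention that $0 \in M$ represents $*$) realizing $\delta([x],[y]) = d(x', y')$. The weak-contraction hypothesis gives $d'(F(x'), F(y')) \le d(x', y')$. I would split into three cases based on whether $F_*$ sends $[x]$ or $[y]$ to $*$. If neither is $*$, then $F(x)$ and $F(y)$ are non-torsion. Theorem~\ref{thm:infinite} produces $a, b \in R \setminus \{0\}$ with $ax = bx'$, so $aF(x) = bF(x')$ in $N$, and combining this with Theorems~\ref{thm:free2} and~\ref{thm:tor3} shows that $F(x')$ represents the class $F_*([x])$ in $\P(N)$ (and similarly $F(y') \in F_*([y])$). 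Hence $\delta' \le d'(F(x'), F(y')) \le d(x', y')$. If both classes are $*$, then $\delta'(*,*) = d'(0,0) = 0$ trivially suffices.

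The mixed case, say $F_*([x]) = *$ and $F_*([y]) \ne *$, is the main obstacle: $F(x')$ is typically nonzero but torsion, so it does not itself represent $*$. The trick is to set $z = F(y') - F(x')$. Since $-F(x')$ is torsion, Theorem~\ref{thm:tor3} gives $[z] = [F(y')] = F_*([y])$ in $\P(N)$ provided $z \ne 0$; the non-vanishing of $z$ follows because $F(y')$ is non-torsion while $F(x')$ is torsion. Translation invariance of $d'$, which holds for the cobordism metric and is the natural setting throughout the paper, then yields $d'(0, z) = d'(F(x'), F(y'))$, so $\delta'(*, F_*([y])) \le d'(0, z) \le d(x', y')$, completing the proof of the lemma and hence the theorem.
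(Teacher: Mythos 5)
Your proof is correct in the setting the paper actually needs, and since the paper records this theorem without proof (``We have the following elementary result''), the natural comparison is with what the intended argument must be; yours is surely it: reduce to the pointwise inequality $\delta'(F_*[x],F_*[y])\le\delta([x],[y])$ and then push an optimal chain forward, which is exactly your first paragraph. One simplification: in the case where neither image is $*$ you do not need Theorems~\ref{thm:infinite}, \ref{thm:free2} or \ref{thm:tor3}. If $F_*[x]\ne *$, then every representative $x_i$ of $[x]$ satisfies $F(x_i)\ne 0$, hence every common divisor $m$ occurring in a chain $x'=x_0\sim' x_1\sim'\cdots\sim' x_n=x$ has $F(m)\ne 0$, and applying $F$ term by term gives $F(x')\sim' F(x_1)\sim'\cdots\sim' F(x)$ directly, i.e.\ $F(x')\in F_*[x]$.

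The substantive point is the one you flagged yourself: translation invariance of $d'$ is not among the stated hypotheses, and it cannot be dispensed with, because the statement is false for a completely arbitrary integer-valued metric. Take $M=N=\Z\oplus\Z_2$, let $F$ be the identity, and let $d=d'$ assign distance $1$ to the pair $(0,1),(1,0)$ and distance $2$ to every other pair of distinct points; this is an integer-valued metric and $F$ is a weak contraction. With the definition of $F_*$ in Theorem~\ref{thm:inject}, the torsion class $[(0,1)]$ is sent to $*$, whose unique representative is $0$, and one checks $\Delta([(0,1)],[(1,0)])=1$ while $\Delta'(*,[(1,0)])=2$; a similar example built from the projection $\Z\oplus\Z_2\to\Z_2$ works if one instead reads $F_*$ as sending $[x]$ to $[F(x)]$ whenever no representative of $[x]$ lies in $\ker F$. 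So your appeal to invariance is not a convenience but a genuinely needed hypothesis: for metrics satisfying $d'(u+w,v+w)=d'(u,v)$ --- in particular the cobordism distance, where $d(\calk\cs\call,\calj\cs\call)=d(\calk,\calj)$ --- your $z=F(y')-F(x')$ device closes the mixed case and the theorem holds. You should promote that remark from an aside to an explicit standing assumption (or note that the theorem requires it); as literally stated, for all integer-valued metrics, the result is not what your argument proves, nor is it true.
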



\section{Properties of the metric $\Delta$ on $\P^*(\calc)$}\label{sec:2kc}

Here is the definition of the metric $\Delta$ restated for the special case of knots. 

\begin{definition}$ $
\begin{enumerate}
\item For $\calk$ and $ \calj $ in $ \calc$,   $d(\calk, \calj) = d(K, J) =  g_4(K \cs -J)$, where $K$ and $J$ are arbitrary representatives of $\calk$ and $\calj$.

\item   For $[\calk] \in \P(\calc)$ and $ [\calj] \in \P(\calc)  $,   \[ \delta  ([\calk],[ \calj]) = \min \{ d(\calk', \calj') \ \big| \  \calk' \in [\calk]\  ,\   \calj' \in [\calj]\}.\]

\item  For $[\calk] \in \P(\calc)$ and $ [\calj] \in \P(\calc)  $, 
\[
\Delta([ \calk],[ \calj ]) = \min\{\delta( [\calk_0],[ \calk_1 ])+ \delta( [\calk_1],[ \calk_2] ) + \cdots + \delta( [\calk_{n-1}], [\calk_n ])\},
\] 
where the minimum is taken over all sequences of classes for which $[\calk_0] =[ \calk]$ and $[\calk_n] =[ \calj]$.
\end{enumerate}
\end{definition}
Here is a consequence of Theorem~\ref{thm:infinite} relating the metric $\Delta$ to linear independence in $\calc$.

\begin{theorem}\label{thm:ind}  If $\calk$ and $\calj$ are  elements of infinite order in $\calc$ then $\Delta(\calk, \calj) = 0$ implies that there are   $a, b \in \Z$ such that $a\calk = b\calj \ne 0$.  
\end{theorem}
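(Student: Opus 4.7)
The plan is to reduce Theorem~\ref{thm:ind} to Theorem~\ref{thm:infinite} by establishing that the hypothesis $\Delta(\calk, \calj) = 0$ forces the equality $[\calk] = [\calj]$ in $\P(\calc)$, and then invoking Theorem~\ref{thm:infinite} in the case $R = \Z$ and $M = \calc$.

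First I would unpack the definition of $\Delta$. The equation $\Delta(\calk, \calj) = 0$ means that there is a finite chain $[\calk] = [\calk_0], [\calk_1], \ldots, [\calk_n] = [\calj]$ of classes in $\P(\calc)$ with
\[
\sum_{i=0}^{n-1} \delta\bigl([\calk_i], [\calk_{i+1}]\bigr) = 0.
\]
Each $\delta$-value is defined as the minimum of $d(K', J') = g_4(K' \cs -J')$ over pairs of representatives, so it lies in $\Z_{\ge 0}$; a sum of nonnegative integers vanishes only when every summand vanishes, so $\delta([\calk_i], [\calk_{i+1}]) = 0$ for each $i$.

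Next I would show that each such equality forces $[\calk_i] = [\calk_{i+1}]$. Because $d$ is nonnegative and integer-valued, the minimum defining $\delta$ is genuinely attained (any nonempty subset of $\Z_{\ge 0}$ has a least element), so one obtains representatives $\calk_i' \in [\calk_i]$ and $\calk_{i+1}' \in [\calk_{i+1}]$ with $g_4(K_i' \cs -K_{i+1}') = 0$. The standard fact that a knot of four-genus zero is slice then yields $\calk_i' = \calk_{i+1}'$ as concordance classes. This common element lies in both equivalence classes, forcing $[\calk_i] = [\calk_{i+1}]$. Concatenating along the chain gives $[\calk] = [\calj]$.

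Finally, $\calk$ has infinite order in $\calc$ by hypothesis, so Theorem~\ref{thm:infinite} applied to the $\Z$-module $\calc$ produces $a, b \in \Z$ with $a\calk = b\calj \ne 0$. The only step that requires a moment of care is the reduction of the chain condition to a single equality of classes, and that reduction rests entirely on the integrality of $d$; no substantive obstacle arises beyond citing the two ingredients above.
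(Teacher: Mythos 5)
Your proof is correct and follows the route the paper intends: the paper states Theorem~\ref{thm:ind} as an immediate consequence of Theorem~\ref{thm:infinite} without writing out the reduction, and your argument supplies exactly that reduction (integrality of $d$ forces every $\delta$ in the chain to vanish, sliceness of a genus-zero difference collapses the chain to $[\calk]=[\calj]$, then Theorem~\ref{thm:infinite} applies to the non-torsion classes). No gaps.
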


Example~\ref{ex:6} demonstrates that if  $\calc$ contains an element of odd order,  then the converse does not generalize to the case of knots of finite order.  On the other hand, if $\calc \cong \Z_2^\infty \oplus \Z^\infty$ as might be conjectured, then the condition that $\calk$ and $\calj$ are of infinite order could be dropped.

Another elementary result is the following.

\begin{theorem}\label{thm:distance1}  {\bf (1)} $\Delta(\calk, \calj) = 1$ if and only if $\delta(\calk, \calj) = 1$.  {\bf (2)} If $\delta(\calk, \calj) = 2$ then $\Delta(\calk, \calj) = 2$.
\end{theorem}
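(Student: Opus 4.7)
The plan exploits two elementary observations: (i) the trivial chain $[\calk_0]=[\calk], [\calk_1]=[\calj]$ shows $\Delta([\calk],[\calj]) \le \delta([\calk],[\calj])$ always, and (ii) since $d$ is non-negative integer-valued and vanishes only on concordant knots, the induced function $\delta$ on $\P(\calc)$ satisfies $\delta([\calk'],[\calj'])=0$ if and only if $[\calk']=[\calj']$. From these it follows that $\Delta$ and $\delta$ both take values in the non-negative integers, and that $\Delta([\calk],[\calj])=0$ iff $[\calk]=[\calj]$. These observations are the only machinery needed.

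For part (1), the forward direction is immediate: $\delta(\calk,\calj)=1$ gives $\Delta(\calk,\calj)\le 1$ by (i), while $\delta>0$ forces $[\calk]\ne[\calj]$ and hence $\Delta\ne 0$, so $\Delta=1$. For the reverse direction, fix an optimal chain $[\calk]=[\calk_0], [\calk_1], \ldots, [\calk_n]=[\calj]$ realizing $\Delta=1$. Since the consecutive values $\delta([\calk_{i-1}],[\calk_i])$ are non-negative integers summing to $1$, exactly one equals $1$ and the rest equal $0$. Each $\delta=0$ step identifies consecutive classes by observation (ii), so deleting the redundant repeats collapses the chain to a single step from $[\calk]$ to $[\calj]$ of value $1$, yielding $\delta(\calk,\calj)=1$.

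Part (2) reduces to part (1). Again by (i), $\Delta(\calk,\calj)\le\delta(\calk,\calj)=2$. The value $\Delta=0$ is ruled out because it would force $[\calk]=[\calj]$ and thus $\delta=0$, and the value $\Delta=1$ is ruled out because the converse direction of part (1) would then force $\delta=1$; both contradict $\delta=2$. Hence $\Delta(\calk,\calj)=2$.

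I do not foresee a genuine obstacle: this is essentially the general principle that the minimum-chain condensation of an integer-valued pseudo-metric into a true metric preserves distances at most $2$. The only point requiring a quick sanity check is observation (ii), namely that $\delta$ vanishes precisely on the diagonal of $\P(\calc)$, which uses nothing beyond the fact that $g_4(K\cs -J)=0$ forces $K$ and $J$ to be concordant; and the chain-collapsing step in the reverse direction of (1), which uses that two classes joined by a $\delta=0$ step are literally equal and so can be merged in the chain without changing either endpoint or the total length.
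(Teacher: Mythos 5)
Your proof is correct, and since the paper states this result without proof (calling it ``elementary''), your argument is precisely the intended one: the trivial chain gives $\Delta \le \delta$, the fact that $d$ is a non-negative integer-valued metric on $\calc$ gives that $\delta$ vanishes only on equal classes of $\P(\calc)$, and collapsing the zero-length steps of an optimal chain handles the converse in (1), with (2) following formally. No gaps; the only points needing care (attainment of the minima because all values are non-negative integers, and well-definedness of $\delta$ on classes) are implicitly or explicitly addressed.
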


Here is one topological result concerning $\Delta([ \calk ], [\calj])$.

\begin{theorem}\label{thm:chain}  If $\Delta([\calk], [\calj]) = n >0 $, then there exists a sequence $\calk = \calk_0 , \calk_1, \ldots , \calk_n = \calj$ such that for $0 \le i \le n-1$,   $\delta(\calk_i , \calk_{i+1}) = 1 $.
\end{theorem}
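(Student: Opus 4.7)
The plan is to take any chain realizing $\Delta([\calk], [\calj]) = n$ and refine it, step by step, until every successive pair has $\delta = 1$; the length of such a chain is then forced to be $n$. First, choose a chain $[\calk] = [\calk_0], [\calk_1], \ldots, [\calk_m] = [\calj]$ with $\sum_{i=0}^{m-1} \delta([\calk_i], [\calk_{i+1}]) = n$. Because $d$ takes values in $\Z_{\ge 0}$, each infimum defining a $\delta$ is actually attained, so a value $\delta([\calk_i], [\calk_{i+1}]) = 0$ produces representatives that are concordant, forcing $[\calk_i] = [\calk_{i+1}]$. Deleting such redundant vertices, I may assume every term in the chain is $\ge 1$; this immediately forces $m \le n$, and if $m = n$ we are done.

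Otherwise some step has $\delta([\calk_i], [\calk_{i+1}]) = k \ge 2$. Pick representatives $K_i' \in [\calk_i]$ and $K_{i+1}' \in [\calk_{i+1}]$ that realize this distance, so $g_4(K_i' \cs -K_{i+1}') = k$. A minimum-genus cobordism between $K_i'$ and $K_{i+1}'$ in $S^3 \times [0,1]$ can be placed in Morse position; after canceling birth/death pairs and grouping the resulting $2k$ saddles into $k$ consecutive pairs whose intermediate level sets are knots, one obtains $K_i' = L_0, L_1, \ldots, L_k = K_{i+1}'$ with $d(L_j, L_{j+1}) \le 1$ for every $j$. Substituting the sequence $[L_0], [L_1], \ldots, [L_k]$ in place of the edge $[\calk_i] \to [\calk_{i+1}]$ yields a new chain from $[\calk]$ to $[\calj]$ whose $\delta$-sum is $n - k + \sum_{j=0}^{k-1} \delta([L_j], [L_{j+1}])$. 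By minimality of $\Delta([\calk], [\calj]) = n$, this sum must be at least $n$, forcing $\sum_j \delta([L_j], [L_{j+1}]) = k$; since each of these $k$ summands lies in $\{0,1\}$, each must equal $1$.

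Iterating this refinement on every remaining step with $\delta \ge 2$ produces a chain whose total $\delta$-sum remains $n$ and in which every edge has $\delta = 1$; such a chain has exactly $n$ edges, which is the desired sequence. The main obstacle is the geometric decomposition lemma invoked in the second paragraph: that a minimum-genus cobordism of genus $k$ between two knots can be split into $k$ genus-one sub-cobordisms whose intermediate level sets are again knots. This is a standard fact in knot concordance but is the one input requiring genuine four-dimensional topology; the rest of the argument is combinatorial, driven solely by the minimality of $\Delta$ and the fact that $d$ is integer-valued.
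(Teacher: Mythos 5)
Your proposal is correct and takes essentially the same route as the paper: the paper proves the statement for $\delta$ by putting a minimal-genus cobordism in Morse position and splitting it into genus-one sub-cobordisms whose level sets are knots, then remarks that this ``clearly implies'' the $\Delta$-statement, which is exactly the combinatorial refinement-and-minimality argument you write out explicitly. The one quibble is your phrase ``canceling birth/death pairs'': the paper does not cancel them (in general one cannot), but instead reorders the critical points so that the births and deaths are absorbed into concordances at the two ends before pairing the remaining saddles --- the decomposition you invoke as a standard fact is precisely what that rearrangement establishes.
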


\begin{proof} We work with $\delta$ and prove the analogous statement. This clearly implies the result for $\Delta$. Thus, assume  $\delta([\calk], [\calj]) = n >0 $.  Then there exists representative knots $K' \in [\calk]$ and $J' \in   [\calj]$ for which $d(K', J') = n$.  

Let $C$ be a genus $n$ cobordism from $K'$ to $J'$.  An isotopy can be performed so that the  maximums   all occur first and the minimum last.  Call the count of these $M$ and $N$.  The saddle points can be put in arbitrary order, the count of these will call $S$. From the Euler characteristic, we know that genus of the cobordism satisfies $n = (S - M -N)/2$. 

By ordering the saddle points, we can arrange the the first $N$ saddle points create a genus 0 cobordism,   a concordance, between $K'$ and a knot $K''$.  That is, after the maximum are passed, there are $N+1$ components, and the first $N$ saddle points reconnect the curve.  Similarly,  the remaining saddle points can be paired so that the last ones along with the minimums form a concordance from $J'$ to a knot $J''$.  

We are now left with a cobordism of genus $n$ from $K''$ to $J''$ containing only saddle points.  Those saddle points can now be ordered to form a set of $(S-M-N)$ pairs:   the first of each pair disconnects the curve, and the second reconnects them.   Thus, we have built a cobordism that consists of a sequence of cobordisms, each of genus 1.  The knots formed in this process constitute the desired knots $K_i$. 
\end{proof}


\section{Computation the projective distance}\label{sec:distance1}
In general, computing the projective distance  $\delta([\calk],[ \calj])$ is inaccessible, and computing $\Delta$ is even more difficult.  For instance, if $\calc$ contains elements that are infinitely divisible, it is hard to imagine what tools could effectively measure the distance between all divisors  for a pair of such knots.  Thus, we will want to restrict ourselves to knots that are primitive, using an additive function to do so, and then use perhaps other additive functions to bound the distance.  

We begin with an elementary observation and move to Theorem~\ref{thm:bound} which provides a tool in our computations of $\delta$.  In the next section we consider the metric $\Delta$.

\begin{theorem} 
Let  $\nu\co \calc \to \Z$ and  $\psi \co \calc \to \R$ be additive functions.  If $\nu(\calk) = 1$, then for all $\calk' \in [\calk]$, $\psi(\calk') = \nu(\calk') \psi( \calk)$.
\end{theorem}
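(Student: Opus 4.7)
The plan is to unpack the meaning of $\calk' \in [\calk]$ as the existence of a finite chain $\calk = \calk_0 \sim' \calk_1 \sim' \cdots \sim' \calk_n = \calk'$ in $\calc \setminus 0$, and then to prove the identity by induction on the length of such a chain. I would strengthen the statement proved inductively: for every $i$, both $\nu(\calk_i) \ne 0$ and $\psi(\calk_i) = \nu(\calk_i)\psi(\calk)$. The non-vanishing clause is the part that allows the induction to propagate past the first step.

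The base case $i = 0$ is immediate from $\nu(\calk) = 1$. For the inductive step, I would use the relation $\calk_i \sim' \calk_{i+1}$ to produce some $m \in \calc$ and integers $r, s$ with $\calk_i = rm$ and $\calk_{i+1} = sm$. Because $\calk_i$ and $\calk_{i+1}$ are both nonzero, $r$ and $s$ are both nonzero. Additivity of $\nu$ gives $\nu(\calk_i) = r\nu(m)$, and since $\nu(\calk_i) \ne 0$ by induction, this forces $\nu(m) \ne 0$. Additivity of $\psi$ gives $\psi(\calk_i) = r\psi(m)$; dividing by $r$ and applying the inductive hypothesis yields
\[
\psi(m) \;=\; \frac{\psi(\calk_i)}{r} \;=\; \frac{\nu(\calk_i)\psi(\calk)}{r} \;=\; \nu(m)\psi(\calk).
\]
Multiplying through by $s$ then gives $\psi(\calk_{i+1}) = s\nu(m)\psi(\calk) = \nu(\calk_{i+1})\psi(\calk)$, and $\nu(\calk_{i+1}) = s\nu(m) \ne 0$ closes the induction.

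The main delicate point is propagating the non-vanishing of $\nu$ through the chain, since only $\nu(\calk_0) = 1$ is hypothesized and intermediate $\calk_i$ might a priori lie in the kernel of $\nu$, in which case the division by $r$ would still work but one would lose control of the ratio $\psi(m)/\nu(m)$. The hypothesis $\nu(\calk) = 1$ rules this out at the start, and the factorization $\nu(\calk_{i+1}) = s\nu(m)$ as a product of nonzero integers keeps $\nu$ nonzero at every stage. No separate torsion analysis is needed: since any additive map into $\Z$ or $\R$ vanishes on the torsion subgroup, the hypothesis $\nu(\calk) = 1$ already forces $\calk$ (and, by the same reasoning applied along the chain, every $\calk_i$) to be of infinite order.
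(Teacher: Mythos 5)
Your proof is correct and follows essentially the same route as the paper: the paper's own argument also unwinds the chain of $\sim'$ relations and observes that at each step the pair $(\nu,\psi)$ scales by a nonzero rational factor $s/r$, which is exactly the content of your inductive step. The only difference is cosmetic — your extra bookkeeping that $\nu(\calk_i)\ne 0$ is harmless but not actually needed, since the identity $\psi(\calk_i)=\nu(\calk_i)\psi(\calk)$ already propagates through the division by $r$ alone.
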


\begin{proof}
If we simplify the condition that $\nu(\calk) = 1$  to $\nu(\calk) \ne 0$, it is  clear from the definition of $\sim'$ that if $\calk \sim' \calk'$,  then $(\nu(\calk'), \psi(\calk')) = c  (\nu(\calk), \psi(\calk))$ for some $c \ne 0 \in \Q$.    Thus, it quickly follows that if  $\calk \sim  \calk'$, we also have  $(\nu(\calk'), \psi(\calk')) = c  (\nu(\calk), \psi(\calk))$ for some $c \ne 0 \in \Q$.  If in this equation apply the condition that   $\nu(\calk) = 1$,  we arrive at the desired result. 
\end{proof}

\begin{theorem}\label{thm:main-bound} 
Let  $\nu_1\co \calc \to \Z$ and  $\nu_2 \co \calc \to \Z$ be additive functions and let $\psi \co \calc \to \R$ be an additive function satisfying $\big| \psi(\calk) \big| \le g_4(\calk)$ for all $\calk \in \calc$.  Suppose that $\nu_1(\calk) = 1$ and $\nu_2(\calj) = 1$.  Then 
\[ 
\delta([\calk],[ \calj]) \ge \min\{ \psi(a\calk \cs -b\calj)\ \big|  \ a, b \ne 0\}.
\]
\end{theorem}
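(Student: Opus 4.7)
The plan is to reduce the bound on $\delta([\calk],[\calj])$ to the bound $|\psi| \le g_4$ applied at a carefully chosen integral combination of $\calk$ and $\calj$. Unwinding the definition, $\delta([\calk],[\calj])$ is the infimum of $g_4(\calk'\cs -\calj')$ as $\calk'$ ranges over $[\calk]$ and $\calj'$ over $[\calj]$, so it suffices to prove that for every such pair of representatives there exist nonzero integers $a,b$ with $g_4(\calk'\cs -\calj') \ge |\psi(a\calk \cs -b\calj)|$; the stated inequality will then follow by taking the infimum on the left and noting that the right-hand side does not depend on $\calk',\calj'$.

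Fix $\calk' \in [\calk]$ and $\calj' \in [\calj]$. By the immediately preceding (unlabeled) theorem, the hypothesis $\nu_1(\calk)=1$ forces $\psi(\calk') = \nu_1(\calk')\,\psi(\calk)$, and similarly $\psi(\calj') = \nu_2(\calj')\,\psi(\calj)$. Set $a = \nu_1(\calk') \in \Z$ and $b = \nu_2(\calj') \in \Z$. These are the natural candidates. Because $\psi$ is additive, we obtain
\[
\psi(\calk' \cs -\calj') \;=\; \psi(\calk') - \psi(\calj') \;=\; a\,\psi(\calk) - b\,\psi(\calj) \;=\; \psi(a\calk \cs -b\calj),
\]
and the hypothesis $|\psi(\cdot)| \le g_4(\cdot)$ gives $g_4(\calk'\cs -\calj') \ge |\psi(a\calk\cs -b\calj)|$.

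The one step to verify carefully, and the main (though mild) obstacle, is that $a$ and $b$ are genuinely nonzero. This is implicit in the proof of the preceding theorem: if $\calk'\sim'\calk$, then there exist $m\in\calc$ and $r,s\in\Z$ with $\calk = rm$, $\calk' = sm$, and $\calk',\calk\ne 0$ forces $r,s,m$ all nonzero, whence $\nu_1(\calk') = s\,\nu_1(m)$ with $\nu_1(m)\ne 0$ (since $\nu_1(\calk) = r\,\nu_1(m) = 1$), giving $\nu_1(\calk')\ne 0$; iterating along a $\sim'$-chain handles the general case $\calk'\sim\calk$. The same argument applies to $b$. Once nonzero integrality of $a,b$ is secured, the chain of inequalities above yields $g_4(\calk'\cs -\calj') \ge \min\{|\psi(a\calk\cs -b\calj)| : a,b\ne 0\}$, and taking the minimum over representatives completes the proof.
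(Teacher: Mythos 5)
Your proof is correct and is essentially the argument the paper intends: Theorem~\ref{thm:main-bound} is stated there without proof, as an immediate consequence of the preceding (unlabeled) theorem, and your deduction --- setting $a=\nu_1(\calk')$, $b=\nu_2(\calj')$, invoking that theorem, and verifying $a,b\ne 0$ by propagating nonvanishing of $\nu_1,\nu_2$ along $\sim'$-chains of nonzero classes --- is exactly that intended argument, with the nonvanishing step usefully made explicit. Note that you actually establish the bound with $\big|\psi(a\calk \cs -b\calj)\big|$ in place of $\psi(a\calk \cs -b\calj)$, which is clearly what the statement means (compare Theorem~\ref{thm:bound}) and is stronger than the literal inequality as printed.
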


Let $\Omega$ denote a set of   real-valued additive invariants on the concordance group that give lower bounds on the four-genus. Let $\nu_1$ and $\nu_2$ be $\Z$--valued additive invariants.  The following is  now immediate.

\begin{theorem} \label{thm:bound} Suppose that $\nu_1(\calk) = 1$ and $\nu_2(\calj) = 1$.  Then 
\[
\delta([\calk],[\calj]) \ge \min \big\{ |  \max\{ \big |a\psi(\calk) - b\psi(\calj) | \ \big| \  \psi \in \Omega  \}\ 
\big|   \ a \in \Z, b\in \Z, ab \ne 0
\big\}.
\]
\end{theorem}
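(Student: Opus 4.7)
The plan is to upgrade Theorem~\ref{thm:main-bound} by pulling the maximum over $\Omega$ inside the minimum over $(a,b)$. The essential observation is that, once representatives $\calk' \in [\calk]$ and $\calj' \in [\calj]$ are fixed, the integers $a$ and $b$ that witness the bound are determined by $\nu_1, \nu_2, \calk', \calj'$ alone, independently of which $\psi \in \Omega$ is tested; all of the bounds $|a\psi(\calk) - b\psi(\calj)| \le g_4(\calk' \cs -\calj')$ therefore aggregate into a single bound for this fixed pair $(a,b)$, which we can then maximize over $\Omega$ before minimizing over $(a,b)$.

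Concretely, I would proceed as follows. Fix $\calk' \in [\calk]$ and $\calj' \in [\calj]$, and set $a \defeq \nu_1(\calk')$ and $b \defeq \nu_2(\calj')$. By the first theorem of this section (the proportionality result on equivalence classes), the hypothesis $\nu_1(\calk) = 1$ forces $a \in \Z \setminus \{0\}$, and moreover for \emph{every} additive real-valued $\varphi$ on $\calc$ one has $\varphi(\calk') = a\,\varphi(\calk)$; analogously $\varphi(\calj') = b\,\varphi(\calj)$ with $b \ne 0$. Applying this to each $\psi \in \Omega$ and using $|\psi(\cdot)| \le g_4(\cdot)$ yields
\[
d(\calk', \calj') \;\ge\; |\psi(\calk') - \psi(\calj')| \;=\; |a\,\psi(\calk) - b\,\psi(\calj)|.
\]
Because $a$ and $b$ do not vary with $\psi$, this inequality holds simultaneously across all $\psi \in \Omega$, so
\[
d(\calk', \calj') \;\ge\; \max_{\psi \in \Omega} |a\,\psi(\calk) - b\,\psi(\calj)| \;\ge\; \min_{\substack{a',b' \in \Z \\ a'b' \ne 0}} \max_{\psi \in \Omega} |a'\psi(\calk) - b'\psi(\calj)|.
\]
The right-hand side is independent of the choice of $\calk', \calj'$, so taking the infimum over all such representatives produces the desired lower bound on $\delta([\calk],[\calj])$.

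The only step that needs a second look is the non-vanishing of $a$ and $b$; this is precisely the content of the observation used in the proof of the preceding proportionality theorem, namely that in the identity $(\nu(\calk'), \varphi(\calk')) = c\,(\nu(\calk), \varphi(\calk))$ the scalar $c \in \Q$ is nonzero, so $\nu(\calk) = 1$ forces $\nu(\calk') = c \ne 0$. Beyond securing this point (and checking that $a, b$ really are \emph{simultaneous} for every $\psi \in \Omega$, which is immediate from the fact that the proportionality theorem applies to any additive real-valued function at once), the argument simply re-runs the proof of Theorem~\ref{thm:main-bound} with the max-over-$\Omega$ inserted at the correct stage, and no genuine obstacle arises.
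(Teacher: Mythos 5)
Your argument is correct and is essentially the paper's: the paper records Theorem~\ref{thm:bound} as an immediate consequence of Theorem~\ref{thm:main-bound} and the preceding proportionality theorem, which is exactly the argument you spell out. You also correctly isolate the one point that legitimizes placing the maximum over $\Omega$ inside the minimum over $(a,b)$, namely that $a=\nu_1(\calk')$ and $b=\nu_2(\calj')$ are nonzero integers determined by the representatives alone, so the bound $d(\calk',\calj')\ge |a\psi(\calk)-b\psi(\calj)|$ holds for all $\psi\in\Omega$ simultaneously.
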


\begin{example}
We apply Theorem~\ref{thm:main-bound} to show that  $\delta([\calt_{2,3}],[\calt_{2,13}] )= 2$.  

Let $\nu_1(K) =    \sigma'_*(\frac{1}{3} +\epsilon)$ and let  $\nu_2(K) =    \sigma'_*(\frac{1}{13} +\epsilon)$.  Then these satisfy the conditions required by Theorem~\ref{thm:main-bound}.   Our set of homomorphisms $\Omega$ will be the set of signature functions $\sigma'_*(t)$ for $0 \le t \le 1$.  We then have
\[
\delta([\calt_{2,3}],[\calt_{2,13}]) \ge \min \big\{ |  \max\{ \big |b\sigma'_{ T_{2,13}}(t)- a\sigma'_{T_{2,3}}(t)  | \ \big| \  t\in [0,1]  \}\ 
\big|   \ a \in \Z, b\in \Z, ab \ne 0
\big\}.
\]
Considering the signature at $x = \frac{3}{13} + \epsilon$ we have $\sigma'_{T_{2,3}}(x) = 0$ and thus for all $a$,  
\[
\big | b\sigma'_{T_{2,13}} (t)          - a      \sigma'_{T_{2,3}}(t)            \big| \ge 2b.
\]
It follows that  $\delta([\calt_{2,3}],[\calt_{2,13}]) \ge 2$.  A construction such as in Section~\ref{sec:geo2k} shows that $g_4( T_{2,13} \cs -4T_{2,3}) \le 2$.
\end{example}

The proof of Theorem~\ref{thm:growth2} relied on the signature function, so we have the following corollary of Theorem~\ref{thm:main-bound}. 

\begin{corollary}\label{thm:growth3}  For any fixed integer $k \ge 1$, 
 \[ 
 \lim_{n \to \infty}  \frac{\delta([\calt_{2,2k+1}],[\calt_{2n+1}])  }{n} = \frac{1}{2k+1}.
 \]
 \end{corollary}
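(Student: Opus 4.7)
The plan is to sandwich $\delta([\calt_{2,2k+1}],[\calt_{2,2n+1}])$ between two quantities whose ratio with $n$ tends to $\frac{1}{2k+1}$, using Theorem~\ref{thm:growth2} for the upper bound and Theorem~\ref{thm:main-bound} for the lower bound.

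For the upper bound, I would first observe that for any nonzero integer $a$, the relation $a\calk \sim' \calk$ holds with common divisor $\calk$, so $a\calk \in [\calk]$ and similarly $b\calj \in [\calj]$. Consequently $\delta([\calk],[\calj]) \le d(a\calk, b\calj)$ for all nonzero $a,b$, and minimizing over such pairs gives $\delta \le \overline{d}$. The upper bound half of Theorem~\ref{thm:growth2}, combined with this general inequality, then yields $\limsup_{n\to\infty} \delta([\calt_{2,2k+1}],[\calt_{2,2n+1}])/n \le \frac{1}{2k+1}$.

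For the lower bound, I would apply Theorem~\ref{thm:main-bound} with the normalizing invariants $\nu_1(\cdot) = \sigma'_\cdot(\frac{1}{2k+1}+\epsilon)$ and $\nu_2(\cdot) = \sigma'_\cdot(\frac{1}{2n+1}+\epsilon)$ and testing invariant $\psi(\cdot) = \sigma'_\cdot(\frac{1}{2k+1}-\epsilon)$. By the signature formula for $T_{2,2m+1}$ in Section~\ref{sec:tpq}, one gets $\nu_1(\calt_{2,2k+1})=1$ and $\nu_2(\calt_{2,2n+1})=1$, and $|\psi|\le g_4$ by the Tristram--Viro proposition. The key feature of this choice of $\psi$ is that $\sigma'_{T_{2,2k+1}}(\frac{1}{2k+1}-\epsilon)=0$, so the contribution of $a$ drops out and
\[
\big|\psi(aT_{2,2k+1} \cs -bT_{2,2n+1})\big| \;=\; |b|\cdot\Big|\sigma'_{T_{2,2n+1}}\!\Big(\tfrac{1}{2k+1}-\epsilon\Big)\Big| \;=\; |b|\cdot\Big\lfloor\!\Big\lfloor \tfrac{1}{2}\tfrac{2n+1}{2k+1}+\tfrac{1}{2}\Big\rfloor\!\Big\rfloor,
\]
which is minimized over nonzero $a,b$ at $|b|=1$. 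Dividing this bound on $\delta$ by $n$ and using that the double-floor differs from its argument by at most $1$ yields $\liminf_{n\to\infty}\delta/n \ge \frac{1}{2k+1}$, completing the sandwich.

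The main point (rather than a real obstacle) is the choice of $\psi$: picking the signature evaluation just below $\frac{1}{2k+1}$ is what kills the $T_{2,2k+1}$-contribution, reducing the two-variable minimization of Theorem~\ref{thm:main-bound} to a one-variable estimate in $b$ that matches the rate coming from Theorem~\ref{thm:growth2}.
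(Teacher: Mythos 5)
Your proposal is correct and follows essentially the same route as the paper: the paper obtains Corollary~\ref{thm:growth3} by noting that the lower bound in the proof of Theorem~\ref{thm:growth2} is purely a signature bound and hence descends to $\delta$ via Theorem~\ref{thm:main-bound}, while the upper bound comes from $\delta \le \overline{d}$ since multiples of a class lie in that class. Your explicit choices of $\nu_1$, $\nu_2$, and $\psi$ (mirroring the paper's $T_{2,3}$/$T_{2,13}$ example) simply make the paper's one-line deduction precise.
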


\subsection{Failure of the triangle inequality for $\delta$.} \label{sec:failure}   The results of Section~\ref{ex:91} gave us  the following. 

\begin{itemize}
\item  $\delta(\{[\calt_{2,41} ], [\calt_{2,61}]) = 2$.
\item  $\delta([\calt_{2,61}] ,[ \calt_{2,91}]) = 2$.
\item  $\delta([\calt_{2,41}] ,[ \calt_{2,91}]) = 5$.
\end{itemize}

To expand on this, w applying Theorem~\ref{thm:distance1} we have

\begin{itemize}
\item  $\Delta([\calt_{2,41} ], [\calt_{2,61}]) = 2$.
\item  $\Delta([\calt_{2,61}] , [\calt_{2,91}]) = 2$.
\item  $\Delta([\calt_{2,41}] ,[ \calt_{2,91}])\le 4$.
\end{itemize}

With this, the necessity of considering chains is defining $\Delta$ is apparent.



\section{The metric $\Delta$ on $\P(\calc)$ and balls of small radius.}\label{sec:small ball}
Recall the definition.

\begin{definition}\label{def;distance}  For $[\calk] \in \P(\calc)$ and $ [\calj] \in \P(\calc)  $, 
\[
\Delta([ \calk],[ \calj ]) = \min\{\delta( [\calk_0],[ \calk_1 ])+ \delta( [\calk_1],[ \calk_2] ) + \cdots + \delta( [\calk_{n-1}], [\calk_n ])\},
\] 
where the minimum is taken over all sequences of classes for which $[\calk_0] =[ \calk]$ and $[\calk_n] =[ \calj]$.
\end{definition}
According the Theorem~\ref{thm:chain}, the minimum can be realized by chains in which each step is of distance 1.  Here we consider balls of radius 1, restricting to the span of $(2, 2k+1)$--torus knots.

\subsection{Balls of radius one}
We begin by  considering  balls of small radius, restricting or examples to the image of the  subspace spanned by the $\calt_{2,2n+1}$.   There is one important observation.  Let $\cals$ denote the span in $\calc$ of classes represented by two-stranded torus knots.  We could have constructed a space $\P(\cals)$ and defined a projective metric $\Delta_{\cals}$.  We are not asserting the the map $(\cals, \Delta_\cals) \to  (\calc, \Delta)$ is an isometric embedding;  see Section~\ref{sec:span}.

\begin{theorem} If $n>k$ and $\delta  ([\calt_{2,2k+1}],   [\calt_{2,2n+1}]) = 1$, then either {\bf (1)} $n = k+1, 2k +1$, or $3k+1$, in which case the minimum is realized by 
$T_{2,2n+1} - \alpha T_{2k+1}$, or   {\bf (2)} $n = 2k$, in which case the minimum is realized by 
$T_{2,2n+1} - (\alpha +1)  T_{2k+1}$. 
\end{theorem}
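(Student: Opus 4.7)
My plan is to characterize $\delta = 1$ by combining the geometric upper bounds of Theorem~\ref{thm:b1} with the signature lower bounds of Corollary~\ref{corrr:sig}. For the realizability (upper-bound) direction, I would substitute each candidate $n$ into Theorem~\ref{thm:b1}. For $n \in \{k+1, 2k+1, 3k+1\}$, one computes $\alpha = 1, 2, 3$ respectively, and $n - \alpha k = 1$ in each case, so part~(1) bounds the four-genus of $T_{2,2n+1} \cs -\alpha T_{2,2k+1}$ by $1$. For $n = 2k$ the hypothesis $2k+1 \nmid 4k+1$ is satisfied and part~(2) gives $\alpha(k+1) + k - n = 1$, bounding the four-genus of $T_{2,2n+1} \cs -(\alpha+1) T_{2,2k+1}$ by $1$. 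Each of these shows $\delta \le 1$.

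For the converse I would first reduce from arbitrary representatives of the two projective classes to positive integer multiples. By Theorem~\ref{thm:infinite}, any $\calk' \in [\calt_{2,2k+1}]$ admits nonzero integers $p, q$ with $p\calk' = q\calt_{2,2k+1}$; applying the additive invariant $\sigma'_t$ shows $\sigma'_{\calk'}$ is proportional to $\sigma'_{T_{2,2k+1}}$, and since $\sigma'_{T_{2,2k+1}}$ achieves the value $1$ while any signature function takes values in $\tfrac{1}{2}\Z$, the scalar lies in $\tfrac{1}{2}\Z$. After clearing denominators the signature lower bound on $g_4(\calk' \cs -\calj')$ reduces to the bound on $g_4(b T_{2,2n+1} \cs -a T_{2,2k+1})$ for appropriate positive integers $a,b$, so it suffices to rule out such integer combinations outside the listed cases.

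Applying Corollary~\ref{corrr:sig} at $t = \frac{1}{2k+1} - \epsilon$ yields $b \lfloor\lfloor \frac{n+k+1}{2k+1}\rfloor\rfloor \le 1$. Since this floor-floor equals $1$ precisely when $k < n \le 3k+1$ and is at least $2$ for $n > 3k+1$, we force $n \le 3k+1$ and $b = 1$. With $b = 1$ the remaining parts of the corollary give $|n - ak| \le 1$ and $|n - ak - \alpha'| \le 1$, where $\alpha' = \lfloor\lfloor \frac{2n+1}{2k+1}\rfloor\rfloor$ is $1$ on $k < n \le 2k$ and $2$ on $2k+1 \le n \le 3k+1$. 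Intersecting these constraints forces $n - ak \in \{0, 1\}$ when $\alpha' = 1$ and $n - ak = 1$ when $\alpha' = 2$. A short enumeration in each range then leaves only $(a, n) \in \{(1, k+1), (2, 2k), (2, 2k+1), (3, 3k+1)\}$. Matching each $a$ against $\alpha = \lfloor \frac{2n+1}{2k+1}\rfloor$ gives $a = \alpha$ in three cases and $a = \alpha + 1$ at $n = 2k$, matching the claim.

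I expect the main obstacle to be the initial reduction to integer multiples: the projective class of a torus knot could a priori contain a half-divisor should that knot be $2$-divisible in $\calc$, which would enlarge the set of signature functions that need to be tested. The integer-valuedness of $\sigma'_{T_{2,2k+1}}$ keeps the relevant scalars discrete, so that after clearing denominators what remains is a routine finite enumeration of the inequalities furnished by Corollary~\ref{corrr:sig}.
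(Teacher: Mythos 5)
Your overall strategy is the same as the paper's: the lower bounds of Corollary~\ref{corrr:sig} force the coefficient of $T_{2,2n+1}$ to be $1$ and cut the problem down to a finite enumeration, while Theorem~\ref{thm:b1} supplies the genus-one constructions in the listed cases. Your direct enumeration of $\big|n-ak\big|\le 1$ and $\big|n-ak-\alpha'\big|\le 1$ together with $b\,\lfloor\lfloor (n+k+1)/(2k+1)\rfloor\rfloor\le 1$ (you have silently corrected the $a$/$b$ slip in item (1) of the corollary, a slip the paper's own proof also carries when it concludes ``$a=1$'') replaces the paper's detour through Theorem~\ref{thm:a=1} and its free-band count, but it lands on exactly the same four pairs $(a,n)$; that is a reorganization, not a different method.

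The one step that does not work as written is your reduction from arbitrary representatives to integer multiples. Knowing only that the proportionality scalars lie in $\tfrac12\Z$ is not enough: if, say, $\sigma'_{\calj'}=\tfrac12\sigma'_{T_{2,2n+1}}$ were possible, then ``clearing denominators'' halves the signature bound rather than preserving it, and the inequalities you enumerate are no longer violated outside the stated list. Concretely, for $k=1$, $n=5$ the scalars $\lambda=2$, $\mu=\tfrac12$ give $\max_t \big|\mu\,\sigma'_{T_{2,11}}(t)-\lambda\,\sigma'_{T_{2,3}}(t)\big| \le 1$, so half-integer scalars would leave $n=3k+2$ (and similar values outside the theorem's list) unexcluded by signature data. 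The repair is to upgrade $\tfrac12\Z$ to $\Z$: evaluate at a point of $\big(\tfrac{1}{2k+1},\tfrac{3}{2k+1}\big)$ avoiding the finitely many roots of the Alexander polynomial of the representative, where the Levine--Tristram signature is an even integer, so $\sigma'$ is integer-valued there and $p\calk'=q\calt_{2,2k+1}$ forces $q/p\in\Z$; equivalently, invoke Theorem~\ref{thm:main-bound} with integer-valued $\nu_1,\nu_2$, which is the mechanism the paper provides for exactly this purpose (its proof of this theorem simply asserts the passage to integer combinations $bT_{2,2n+1}\cs -aT_{2,2k+1}$ without comment). With that fix, the rest of your enumeration and the realizability computations from Theorem~\ref{thm:b1} are correct.
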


\begin{proof}
If $\delta  ( [ \calt_{2,2k+1}], [\calt_{2,2n+1}]  ) = 1$, then for some $a$ and $b$,   $g_4 ( bT_{2,2n+1}\cs - aT_{2,2k+1}) = 1$.  The signature condition implies that $a=1$ and we are in the setting of Theorem~\ref{thm:a=1}. 

The genus 1 surface is built as illustrated in Figures~\ref{fig:schematic1}~and~\ref{fig:schematic2}.  In those diagrams, many of the bands have surgery curves going over them.  Let the number of bands on the upper set that do not interact with surgery curves be denoted $U$.  Let the lower count be $L$.   In Figure~\ref{fig:schematic1} we have $U = 1 + 3 = 4$ and $L=0$.     In  Figure~\ref{fig:schematic2} we have $U = 2$ and $L=2$. An important observation is that after surgery, the genus of the surface that results is $(U+L)/2$.   

 By Theorem~\ref{thm:a=1} we need to consider two cases:   $  T_{2,2n+1} \cs - \alpha T_{2,2k+1}  $ and $  T_{2,2n+1} \cs - (\alpha-1) T_{2,2k+1} $ (recall that $\alpha = \lfloor \frac{2n+1}{2k +1}\rfloor$).

\medskip

\noindent{\bf Case 1:  $  T_{2,2n+1}\cs - \alpha T_{2,2k+1}  $.  }  (See Figure~\ref{fig:schematic1}.) In this case we have that $L=0$ and $U = (\alpha -1) +\big( 2n - ( \alpha(2k+1) -1)\big) $ which simplifies to give 
\[ (U + L)/2 = n - \alpha k.\]
If this equals 1, so that $n = \alpha k +1$,  we find 
\[\alpha = \Big \lfloor \frac{2n+1}{2k+1}\Big \rfloor = \Big \lfloor \frac{2\alpha k +2+1}{2k+1}\Big  \rfloor = \Big \lfloor \frac{2\alpha k + \alpha +3 - \alpha}{2k+1}\Big  \rfloor
= \Big \lfloor \alpha +  \frac{ 3 - \alpha}{2k+1}\Big  \rfloor.
\]
Since $\alpha$ is a positive integer, this can  occur only  if $1 \le \alpha \le 3$. 

\medskip

\noindent{\bf Case 2:  $  T_{2,2n+1} \cs - (\alpha +1) T_{2,2k+1}  $.  }   (See Figure~\ref{fig:schematic2}.)  In this case,   $U = \alpha$.  For the lower surface we have $L = 2k - \big(    2n -  \alpha (2k+1)              \big)  =  2k(\alpha +1) + \alpha -2n.$  Thus, we have
\[ (U + L) /2 = k\alpha + k +\alpha - n.
\]
Thus, if this is 1, we have $n = k \alpha +k+\alpha -1$, and 
\[\alpha = \Big \lfloor \frac{2n+1}{2k+1}\Big \rfloor = \Big \lfloor \frac{2\alpha k +2k + 2\alpha - 2 +1 }{2k+1}\Big  \rfloor 
= \Big \lfloor \frac{2\alpha k + \alpha +2k +   \alpha - 1 }{2k+1}\Big  \rfloor
= \Big \lfloor \alpha  + \frac{ 2k +   \alpha - 1 }{2k+1}\Big  \rfloor.
\]
This is possible only if 
\[  \frac{ 2k +   \alpha - 1 }{2k+1}  < 1.
\]
This implies that $\alpha < 2$, that is, $\alpha =1$.  This reduces to the case of $n = 2k$, as desired.
\end{proof}

In the following corollary, we make a small change in notation, working with torus knots $T_{2,N}$ rather than $T_{2,2k+1}$.

\begin{corollary}
The ball of radius one about the class  $[\calt_{2,N}]$ contains the following classes: $[\calt_{2,N+2 }]$, $[\calt_{2,2N-1 }]$, $[\calt_{2,2N+1 }]$ and $[\calt_{2,3N }]$, and no other elements  $[\calt_{2,N'}]$ with  $N' > N$. 
\end{corollary}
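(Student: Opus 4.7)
The plan is simply to translate the preceding theorem from the $T_{2,2k+1}$ parametrization to the $T_{2,N}$ parametrization and verify that the four values of $n$ listed there correspond to the four values of $N'$ in the corollary. Write $N = 2k+1$ and $N' = 2n+1$; then the hypothesis $n > k$ is equivalent to $N' > N$, and the theorem states that $\delta([\calt_{2,2k+1}], [\calt_{2,2n+1}]) = 1$ forces $n \in \{k+1,\, 2k,\, 2k+1,\, 3k+1\}$.

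Next I would substitute these four values of $n$ back into $N' = 2n+1$: $n = k+1$ gives $N' = 2k+3 = N+2$; $n = 2k$ gives $N' = 4k+1 = 2N-1$; $n = 2k+1$ gives $N' = 4k+3 = 2N+1$; and $n = 3k+1$ gives $N' = 6k+3 = 3N$. Each of these corresponds exactly to one of the four claimed torus knots $T_{2,N+2}$, $T_{2,2N-1}$, $T_{2,2N+1}$, $T_{2,3N}$.

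Finally, for each of these four cases, the previous theorem also guarantees that the distance is realized by an explicit surface construction (either $T_{2,2n+1} \cs -\alpha T_{2,2k+1}$ or $T_{2,2n+1} \cs -(\alpha+1) T_{2,2k+1}$), so the ``contains'' direction follows. The ``no other elements'' direction is just the contrapositive of the previous theorem: any $N' > N$ with $\delta = 1$ must arise from one of the four listed values of $n$. Since this corollary is essentially a restatement, there is no real obstacle—the only thing to double-check is the elementary arithmetic of the substitution, and that odd values of $N$ correspond bijectively to nonnegative integers $k$.
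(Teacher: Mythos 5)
Your proposal is correct and is essentially the argument the paper intends: the corollary is just the preceding theorem rewritten via $N=2k+1$, $N'=2n+1$, and your substitutions $n=k+1,\,2k,\,2k+1,\,3k+1\mapsto N'=N+2,\,2N-1,\,2N+1,\,3N$ match exactly, with the explicit genus-one surfaces from Theorem~\ref{thm:b1} giving containment and the theorem's classification giving exclusion. The only point worth stating (which the paper also leaves implicit) is that $\delta\ge 1$ for distinct classes, which follows from the linear independence of the $\calt_{2,N}$ detected by their signature functions.
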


\begin{example}  The ball of radius one around the class $[\calt_{2,15}]$ consists of the following set:
\[ B_1[(\calt_{2,15}]) = \{[ \calt_{2,5}], [\calt_{2,7}],[ \calt_{2,29}],[ \calt_{2,31}], [\calt_{2,45}]\}. 
\]
\end{example}

\subsection{Balls of radius two} We study balls of radius two only to the extent needed to build the following example.

\begin{example}\label{ex:2-25} $\Delta([\calt_{2,5} ], [\calt_{2,25}]) = 2$, but for any class $\calj$ for which $\delta([\calt_{2,5}],[ \calj]) = 1$  and $\delta([\calj], [\calt_{2,25}]) =1$, $\calj$ cannot be of the form $\calt_{2,2k+1}$.  The explanation is given next.
\end{example}

Here is a the general result that shows that $\Delta([\calt_{2,5}] , [\calt_{2,25}]) = 2$. 

\begin{lemma}   For $n = 5k+2$,  $\Delta([\calt_{2,2k+1}], [\calt_{2,2n+1}]) = 2$.  
\end{lemma}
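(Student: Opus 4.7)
The plan is to establish $\delta([\calt_{2,2k+1}],[\calt_{2,2n+1}])=2$ and then appeal to Theorem~\ref{thm:distance1}(2) to upgrade this equality to $\Delta([\calt_{2,2k+1}],[\calt_{2,2n+1}])=2$.

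For the upper bound, observe that when $n=5k+2$ we have $2n+1=10k+5=5(2k+1)$, so $\alpha=\lfloor(2n+1)/(2k+1)\rfloor=5$. Theorem~\ref{thm:a=1} then gives
\[
g_4(T_{2,2n+1}\cs -5\,T_{2,2k+1})=n-5k=2.
\]
Since $5\calt_{2,2k+1}\in[\calt_{2,2k+1}]$ and $\calt_{2,2n+1}\in[\calt_{2,2n+1}]$, this shows $\delta\le 2$.

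For the matching lower bound, I would mimic the argument used earlier to show $\delta([\calt_{2,3}],[\calt_{2,13}])=2$, applying Theorem~\ref{thm:main-bound} with carefully chosen signature evaluations. Take $\nu_1(\calk)=\sigma'_K(\tfrac{1}{2k+1}+\epsilon)$ and $\nu_2(\calk)=\sigma'_K(\tfrac{1}{2n+1}+\epsilon)$; the standard jump description of $\sigma'_{T_{2,2m+1}}$ yields $\nu_1(\calt_{2,2k+1})=1=\nu_2(\calt_{2,2n+1})$, so the hypotheses of Theorem~\ref{thm:main-bound} are met. As the $g_4$-bounded additive invariant take $\psi(\calk)=\sigma'_K(\tfrac{1}{2k+1}-\epsilon)$. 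The arithmetic identity $\tfrac{1}{2k+1}=\tfrac{5}{2n+1}$---which is precisely what $n=5k+2$ buys us---is the critical input: the value $\tfrac{1}{2k+1}-\epsilon$ lies just below the first jump of $\sigma'_{T_{2,2k+1}}$, so $\psi(\calt_{2,2k+1})=0$, but it also lies in the interval $\bigl(\tfrac{3}{2n+1},\tfrac{5}{2n+1}\bigr)$, where $\sigma'_{T_{2,2n+1}}$ takes the value $2$, so $\psi(\calt_{2,2n+1})=2$. For any nonzero integers $a,b$ we therefore have
\[
\bigl|\psi(a\calt_{2,2k+1}\cs -b\calt_{2,2n+1})\bigr|=|a\cdot 0-b\cdot 2|=2|b|\ge 2,
\]
and Theorem~\ref{thm:main-bound} supplies $\delta\ge 2$.

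The only subtlety worth emphasizing is conceptual rather than computational: the projective class $[\calt_{2,2k+1}]$ contains representatives far wilder than integer multiples of the torus knot (for example, connected sums with arbitrary finite-order classes). Theorem~\ref{thm:main-bound} is engineered precisely to absorb this: the additive invariants $\nu_1,\nu_2$ pin down any representative up to an effective integer multiplicity, and additivity of $\psi$ then propagates the bound $2|b|\ge 2$ to the entire projective class. No further obstacle arises, so combining the two bounds gives $\delta=2$ and hence $\Delta=2$.
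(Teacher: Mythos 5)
Your proof is correct and takes essentially the same route as the paper: the upper bound comes from the surface construction behind Theorem~\ref{thm:b1} (giving $g_4(T_{2,2n+1}\cs -5T_{2,2k+1})\le n-5k=2$), and the lower bound comes from a signature evaluation in the interval $\bigl(\tfrac{3}{2n+1},\tfrac{5}{2n+1}\bigr)$, where your point $\tfrac{1}{2k+1}-\epsilon$ and the paper's $\tfrac{3}{10k+5}+\epsilon$ lie on the same step of both signature functions, so the computation is identical. Your explicit appeal to Theorem~\ref{thm:main-bound} to handle arbitrary representatives of the projective classes and to Theorem~\ref{thm:distance1} to pass from $\delta=2$ to $\Delta=2$ just makes precise what the paper leaves implicit.
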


\begin{proof}  A construction such as illustrated in Figure~\ref{fig:schematic1} shows that if $n = 5k +2$, then  $\delta([\calt_{2,2k+1}],[ \calt_{2,2n+1}]) \le 2$.  The signature function, evaluated at $\frac{3}{10k + 5} + \epsilon$ can be used to show this is an equality.
\end{proof}

\begin{itemize}
\item  $k= 1, n = 7$.   We have that  $\Delta(\calt_{2,3}, \calt_{2,15}) = 2$.   Notice that there is a chain:  $\delta(\calt_{2,3}, \calt_{2,5}) = 1$ and $\delta(\calt_{2,5}, \calt_{2,15}) = 1$.  There is also the chain:  $\delta(\calt_{2,3}, \calt_{2,7}) = 1$ and $\delta(\calt_{2,7}, \calt_{2,15}) = 1$.

\item $k = 2, n = 12$.  We have that  $\Delta(\calt_{2,5}, \calt_{2,25}) = 2$.  However, there is no chain of length two among two-stranded torus knot classes with both steps of length one.  Starting with $\calt_{2,5}$, the only knots with a $\delta $ distance of 1 are $\calt_{2,3}, \calt_{2,7}$, $\calt_{2,9}$, $\calt_{2,11}$, and $\calt_{2,15}$.  The radius one balls around these include $ \calt_{2,9 }$, $ \calt_{2,13 }$, $ \calt_{2,15 }$, $ \calt_{2,21 }$, $ \calt_{2,11 }$, $ \calt_{2,17 }$,$ \calt_{2,19 }$, $ \calt_{2,27 }$, $ \calt_{2,17 }$, $ \calt_{2,29 }$, $ \calt_{2,31 }$ and  $ \calt_ {2,45 }$.  Notice that $\calt_{2, 25} $ is not on the list.
\end{itemize}
 
Theorem ~\ref{thm:chain} implies that there is a knot $\calj$ for which $\delta(\calt_{2,5}, \calj) = 1$ and $\delta(\calj, \calt_{2,25}) =1$.  One example is $\calj =  2\calt_{2,5} \cs   \calt_{2,15}$.  It is a simple exercise to find a pair of band moves that converts $5\calt_{2,5}$ into $\calj$, and another pair of band moves that converts $\calj$ into $\calt_{2,25}$.


\section{Linear spans}\label{sec:span}

Given any subgroup $\cals \subset \calc$, there is  the projective space $\P(\cals)$ along with the metric $\Delta_\cals$.  We can then consider the metric properties of the map induced by inclusion $\P(\cals) \to \P(\calc)$.  Here are two relevant examples.

\begin{example}\label{ex:4191} Let $\cals =  \left< \calt_{2,41} , \calt_{2,91}\right>$ The inclusion $\P(\cals ) \to \P(\calc)$ is not an isometry.  
From Example~\ref{ex:91} we have that the distance between any two  elements in $\cals$ is at least 5, so that the projective
 distance   $\Delta_\cals([\calt_{2,41}] ,[ \calt_{2,91}])\ge 5$.   In fact, the distance is precisely 5.  On the other hand, we saw in the example that in $\P(\calc)$ we have $\Delta( [\calt_{2,41}] , [\calt_{2,91}]) \le  4$.
\end{example}
 
\begin{example} Let $\cals = \left< \calt_{2,5} , \calt_{2,25}\right>$.  Theorem~\ref{thm:chain} does not hold in $\P  (\cals)$.   We saw in Example~\ref{ex:2-25} that $\Delta( [ \calt_{2,5}] , [\calt_{2,25}]) = 2$.  Since $g_4( 5\calt_{2,5} \cs -\calt_{2,25}) =2$, we have that   $\Delta_\cals([\calt_{2,5}], [\calt_{2,25}]) = 2$.  On the other hand, the only element   $\calj \in  \left< \calt_{2,5} , \calt_{2,25}\right>$ for which $\Delta( [\calt_{2,5}], [\calj]) = 1$ is $\calj = \calt_{2,25} $ itself.
\end{example}


\section{A simplical complex built from $\P(\calc)$}\label{sec:simplicial}

There is a canonical simplicial complex associated to  $(\P(\calc), \Delta)$, denoted $\overline{(\P(\calc), \Delta)}$, which we will abbreviate $\overline{\P(\calc)}$.  We introduce it here to provide concise statements of basic questions about the metric properties of  $ P(\calc)$.

For any set with integer-valued metric, $(X, d)$, there is an embedding of $X$ into a simplicial complex $\overline{(X,d)}$.  By definition, an $n$--simplex 
of  $\overline{(X,d)}$ consists of a set of distinct elements $\{x_0, \ldots, x_n\}$ such that $d(x_i, x_j) = 1$ if $i\ne j$.  This is an example of a {\it Vietoris-Rips} complex~\cite{MR1368659}.

\begin{example}  The following is a $3$--simplices of $\overline{\P(\calc)}$: $\{ [\calt_{2,3}], [\calt_{2,5}] , [\calt_{2,7}] , [\calt_{2,3} \cs \calt_{2,5}] \}$.

\end{example}

\begin{example}  There exists an infinite set of $n$--simplices in $\overline{\P(\calc)}$.    Let $K_n$ be the $n$--twisted double of the unknot  with clasp chosen so that the Seifert form is   
 $  \begin{pmatrix} 
     1 & 1 \\
      0 & n \\
   \end{pmatrix}.$
   The set $\{K_n\}_{n \ge 1}$ is linearly independent; this follows from the independence of the signature functions, which  have jumps at complex numbers $e^{\theta_n i} $  where $\cos(\theta_n) = 1 - \frac{1}{2n}$.  (These knots  were first used to show that $\calc$ contains an infinite free summand  by Milnor~\cite{MR0242163}.)   
   
   The knot  $K_n$ can be unknotted with a single negative to positive crossing change.  Thus, $K_n \cs -K_m$ can by unknotted with one positive and one negative crossing change.  It follows that $K_n \cs -K_m$ bounds a disk in $B^4$ with two double points of opposite sign.  A simple tubing construction yields an embedded punctured torus, showing that $\Delta( [\calk_n], [\calk_m]) = 1$.  Thus, any set of $n+1$ of these knots yields an $n$--simplex in $\overline{\P(\calc)}$.

\end{example}

\begin{example}  There exists an infinite set of $n$--simplices in $\overline{\P(\calc)}$ spanned by algebraically slice knots.   We work with the same knots $K_n$ but   use the set of knots  $\{ K_n \}$ where  $n$ is restricted to be of the form  $n= -k(k+1)$ for some $k\ge 2$.  Using the results of~\cite{MR900252}, Jiang~\cite{MR620010} proved that the concordance classes of these knots are linearly independent over $\Z$.  The same proof as in the previous example shows that they are all of $\Delta$--distance 1 from each other. 
 \end{example}

 
\section{Problems.} \label{sec:problems}
Here are a few problems.  
\begin{enumerate}
\item Show that $(\P(\calc), \Delta)$ is unbounded.

\item  Show that every element of $(\P(\calc), \Delta)$ has infinite order; that is, the ball of radius one about every element   is infinite.  In~\cite{MR1905691}, Hirasawa and Uchida proved such a statement  for the Gordian complex of the set of knots, where distance is determined by the minimal number of crossing changes required to convert one knot into another; further results were obtained by Baader~\cite{MR2237515}.  The invariants used in those papers do not seem to be applicable in working with $\P(\calc)$.

\item    Under what conditions on $\cals$ is the map $(\P(\cals),\Delta_\cals)  \to (\P(\calc), \Delta)$ isometric?  Note that in Example~\ref{ex:4191} we saw that the inclusion $\P(\left< \calt_{2,41}, \calt_{2,91}\right> )\to \P(\calc)$ is not an isometry.   One might conjecture that if $\cals$ is the span of positive 
(or strongly-quasipositive) knots, then the inclusion is isometric.  (The importance of strongly-quasipositive knots appeared in the work of Rudolph~\cite{MR1193540} and has been extensively studied from the perspective of Heegaard-Floer theory; see, for instance,~\cite{MR2646650}.)

\item  A metric can be defined on   $\P (\calc / {\text{Torsion}} )$ by modifying Definition~\ref{def;distance} so that the path is restricted to non-torsion classes.  Using this metric,  is the injection $\P (\calc / {\text{Torsion}} ) \to \P(\calc)$ isometric?

\item  Let $\cals$ denote the concordance group of topologically slice knots or the subgroup  generated by knots with Alexander polynomial $A_K(t) =1$.  What can be said about $(\P(\cals), \Delta_\cals)$?  In particular, what is the dimension of $\overline{(\P(\cals), \Delta)}$?

\item  Let $d$ be a metric on $\Z \oplus \Z$; for instance, one can could build $d$ using the $L^1$--norm $\big| (a, b) \big| = \max\{ \big| a\big|, \big| b \big| \}$.  Describe the metric space $(\P(\Z \oplus \Z), \Delta)$.  Notice that $\P(\Z \oplus \Z)$ is in  natural  bijective correspondence with the 1--dimensional rational projective line $\Q\P^1$.  What can be said about the simplicial complex $\overline{\P(\Z \oplus \Z), \Delta)}$?

Here are two simple examples that illustrates a property of $ {\P(\Z \oplus \Z), \Delta)}$.  The arrows indicate steps of length 1.
\[
(8,15) \to (8,14) \sim (4,7) \to (4,6) \sim (2,3) \to (2,2) \sim (1,1)
\]
\[
(135, 173)  \to (135, 174) \sim (45, 58) \to (44,58) \sim (22,29) \to (21,28) \sim (3,4) \to (3,3) \sim (1,1) 
\]
The first example, showing that $\Delta( (8,15) ,(1,1)) \le 3$, points to the fact that in general $ \Delta( (x,y) ,(1,1))$ is bounded above by something of the order of  $\max\{ \log_2(x), \log_2(y)\}$.  The second indicates that this bound is probably  a significant overestimate in many cases. \end{enumerate}





\bibliography{../../BibTexComplete}
\bibliographystyle{plain}	

\end{document}